\documentclass{amsart}
\usepackage[utf8]{inputenc}

\usepackage{amssymb}
\usepackage{amsmath}
\usepackage{amsthm}
\usepackage[pdftex]{graphicx}
\usepackage{tikz}
\usepackage{url}
\usepackage{mathtools}
\usepackage[backend=biber,style=alphabetic,sorting=nty,doi=false,isbn=false,url=false,eprint=true]{biblatex}
\usepackage{geometry}
\geometry{left=35mm,right=35mm,top=35mm,bottom=35mm}


\renewbibmacro{in:}{}
\addbibresource{gp.bib}

\usetikzlibrary{positioning}
\usetikzlibrary{calc}
\usetikzlibrary{decorations.pathreplacing}
\usetikzlibrary{cd}

\newcommand{\PP}{\mathbb{P}}

\newcommand{\R}{\mathbb{R}}

\newcommand{\boldsig}{\boldsymbol{\Sigma}}
\newcommand{\boldpi}{\boldsymbol{\Pi}}
\newcommand{\bolddelta}{\boldsymbol{\Delta}}

\newcommand\forces{\Vdash}

\newcommand{\OR}{\mathbin{\text{or}}}
\newcommand{\AND}{\mathbin{\text{and}}}
\newcommand{\GP}{\mathsf{GP}}
\newcommand{\non}{\operatorname{non}}
\newcommand{\cov}{\operatorname{cov}}
\newcommand{\add}{\operatorname{add}}
\newcommand{\cof}{\operatorname{cof}}
\newcommand{\nul}{\mathsf{null}}
\newcommand{\meager}{\mathsf{meager}}

\newcommand{\frakc}{\mathfrak{c}}
\newcommand{\frakb}{\mathfrak{b}}
\newcommand{\frakd}{\mathfrak{d}}
\newcommand{\all}{\mathsf{all}}
\newcommand{\ZFC}{\mathsf{ZFC}}
\newcommand{\ZF}{\mathsf{ZF}}
\newcommand{\AD}{\mathsf{AD}}
\newcommand{\CH}{\mathsf{CH}}
\newcommand{\proj}{\operatorname{proj}}
\newcommand{\supt}{\operatorname{supt}}
\newcommand{\cf}{\operatorname{cf}}
\newcommand{\LT}{\mathbf{LT}}
\newcommand{\nBC}{\mathbf{nBC}}
\newcommand{\Det}{\operatorname{Det}}
\newcommand{\Coll}{\mathrm{Coll}}
\newcommand{\Con}{\operatorname{Con}}
\newcommand{\projective}{\mathsf{projective}}
\newcommand{\nullset}{\mathsf{nullset}}
\newcommand{\Borel}{\mathsf{Borel}}
\newcommand{\I}{\mathcal{I}}
\newcommand{\IP}{\mathsf{IP}}

\DeclarePairedDelimiter\abs{\lvert}{\rvert}
\newcommand{\seq}[1]{{\langle#1\rangle}}
\DeclarePairedDelimiterX{\norm}[1]{\lVert}{\rVert}{#1}

\renewcommand\emptyset{\varnothing}
\renewcommand\subset{\subseteq}
\renewcommand{\setminus}{\smallsetminus}

\theoremstyle{definition}
\newtheorem{thm}{Theorem}[section]
\newtheorem*{thm*}{Theorem}
\newtheorem{defi}[thm]{Definition}
\newtheorem{notation}[thm]{Notation}
\newtheorem*{defi*}{Definition}
\newtheorem{lem}[thm]{Lemma}
\newtheorem*{lem*}{Lemma}
\newtheorem{fact}[thm]{Fact}
\newtheorem*{fact*}{Fact}
\newtheorem{prop}[thm]{Proposition}
\newtheorem*{prop*}{Proposition}

\newtheorem*{rmk*}{Remark}
\newtheorem{cor}[thm]{Corollary}
\newtheorem*{cor*}{Corollary}
\newtheorem{prob}[thm]{Problem}

\title{Goldstern's principle about unions of null sets}
\author{Tatsuya Goto}
\date{\today}
\address{
	\newline
	Institute of Discrete Mathematics and Geometry, TU Wien \newline
	Wiedner Hauptstrasse 8-10/104, 1040 Wien, Austria
}
\keywords{Set theory of the reals, Lebesgue measure}
\subjclass[2020]{03E15}
\email{tatsuya.goto@tuwien.ac.at}
\thanks{Supported by JSPS KAKENHI Grant Number JP22J20021}

\begin{document}
	\begin{abstract}
		Goldstern showed in \cite{Goldstern1993} that the union of a real-parametrized, monotone family of Lebesgue measure zero sets has also Lebesgue measure zero provided that the sets are uniformly $\boldsig^1_1$.
		Our aim is to study to what extent we can drop the $\boldsig^1_1$ assumption.
		We show Goldstern's principle for the pointclass $\boldpi^1_1$ holds.
		We show that Goldstern's principle for the pointclass of all subsets is consistent with $\ZFC$ and show its negation follows from $\CH$.
		Also we prove that Goldstern's principle for the pointclass of all subsets holds both under $\ZF + \AD$ and in Solovay models.
	\end{abstract}
	
	\maketitle
	
	
	\section{Introduction}
	
	In \cite{Goldstern1993}, Goldstern showed the following theorem: let $\seq{A_x : x \in \omega^\omega}$ be a family of Lebesgue measure zero sets. Assume that this family is increasing in the sense that if $x, x' \in \omega^\omega$ satisfies $(\forall n\in\omega)(x(n) \le x'(n))$ then $A_x \subset A_{x'}$. Also assume that $A = \{ (x, y) : y \in A_x \}$ is a $\boldsig^1_1$ set. Then $\bigcup_{x \in \omega^\omega} A_x$ has also Lebesgue measure zero.
	Goldstern stated this theorem in terms of complements and applied it in the context of uniform distribution theory.
	Our main focus is to study to what extent we can remove this $\boldsig^1_1$ assumption.
	
	In Section \ref{sec:review}, we will review Goldstern's proof and define our principle $\GP(\Gamma)$, which is the principle that replaces $\boldsig^1_1$ in Goldstern's theorem with a pointclass $\Gamma$.
	In Section \ref{sec:gppi11}, we will show $\GP(\boldpi^1_1)$.
	In Section \ref{sec:neggpall}, we will show the consistency of $\neg \GP(\all)$, where $\all$ is the pointclass of all subsets of Polish spaces.
	In Section \ref{sec:gpall}, we will prove that $\GP(\all)$ is consistent with $\ZFC$.
	In Section \ref{sec:gpdelta12}, we will show that $\GP(\bolddelta^1_2)$ implies a combinatorial hypothesis.
	In Section \ref{sec:determinacy}, we will show that $\GP(\all)$ holds under $\ZF+\AD$.
	In Section \ref{sec:largecardinals}, we will prove under large cardinal hypotheses that $\GP(\boldsig^1_n)$ and $\GP(\bolddelta^1_{n+1})$ can be separated for each $n \ge 2$.
	In Section \ref{sec:solovaymodel}, we will show that $\GP(\all)$ holds in Solovay models.
	In Section \ref{sec:otherideals}, we will show $\neg \GP(\Borel, \mathcal{E})$, where $\mathcal{E}$ is the $\sigma$-ideal generated by closed null sets.
	
	
	In the rest of the section, we review basic terminology.
	
	\begin{defi}
		Define relations $\le$ and $\le^*$ on $\omega^\omega$ as follows: for $x, x' \in \omega^\omega$,
		\begin{align*}
			x \le x' &\iff (\forall n\in\omega)(x(n) \le x'(n))\text{, and} \\
			x \le^* x' &\iff (\exists m \in \omega)(\forall n \ge m)(x(n) \le x'(n)).
		\end{align*}
	\end{defi}
	
	The cardinals defined below are typical examples of what are called cardinal invariants. For a detailed explanation of these cardinal invariants, see \cite{bartoszynski1995set}.
	
	\begin{defi}
		\begin{enumerate}
			\item We say $F \subset \omega^\omega$ is an unbounded family if $\neg(\exists g \in \omega^\omega)(\forall f \in F)(f \le^* g)$. Put $\frakb = \min \{\abs{F} : F \subset \omega^\omega \text{ unbounded family} \}$.
			\item We say $F \subset \omega^\omega$ is an dominating family if $(\forall g \in \omega^\omega)(\exists f \in F)(g \le^* f)$. Put $\frakd = \min \{\abs{F} : F \subset \omega^\omega \text{ dominating family} \}$.
			\item $\nul$ and $\meager$ denote the Lebesgue measure zero ideal and Baire first category ideal on $2^\omega$, respectively.
			\item For an ideal $\I$ on a set $X$: $\add(\I)$ (the additivity number of $\I$) is the smallest cardinality of a family $F$ of sets in $\I$ such that the union of $F$ is not in $\I$.
			\item For an ideal $\I$ on a set $X$: $\cov(\I)$ (the covering number of $\I$) is the smallest cardinality of a family $F$ of sets in $\I$ such that the union of $F$ is equal to $X$.
			\item For an ideal $\I$ on a set $X$: $\non(\I)$ (the uniformity of $\I$) is the smallest cardinality of a subset $A$ of $X$ such that $A$ does not belong to $\I$.
			\item For an ideal $\I$ on a set $X$: $\cof(\I)$ (the cofinality of $\I$) is the smallest cardinality of a family $F$ of sets in $\I$ that satisfies the following condition: for every $A \in \I$, there is $B \in F$ such that $A \subset B$.
		\end{enumerate}
	\end{defi}

	\begin{notation}
		We denote interpretations of Borel codes by the hat symbol. So $\hat{c}$ is the interpretation of $c$ if $c$ is a Borel code.
	\end{notation}

	In this paper we will often use pointclasses from projective hierarchy, such as $\boldsig^1_1$,  $\boldsig^1_2$,  $\Sigma^1_1(a)$, etc. For a detailed explanation of projective hierarchy, see  \cite{moschovakis2009descriptive}.

\section{Review of Goldstern's proof}\label{sec:review}

In \cite{Goldstern1993}, Goldstern proved the following theorem.
In the proof, he uses the Shoenfield absoluteness theorem and the random forcing. As for these, see \cite[Chapter 3]{kanamori2008higher}.

\begin{thm}[Goldstern]\label{goldsterntheorem}
	Let $(Y, \mu)$ be a Polish probability space.
	Let $A \subset \omega^\omega \times Y$ be a $\boldsig^1_1$ set.
	Assume that for each $x \in \omega^\omega$,
	\[
	A_x := \{ y \in Y : (x, y) \in A \}
	\]
	has measure $0$.
	Also, assume that $(\forall x, x' \in \omega^\omega)(x \le x' \Rightarrow A_{x} \subset A_{x'})$.
	Then $\bigcup_{x \in \omega^\omega} A_x$ also has measure $0$.
\end{thm}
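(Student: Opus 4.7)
The plan is to argue by contradiction using random forcing and Shoenfield absoluteness. Suppose $B := \bigcup_{x \in \omega^\omega} A_x$ does not have measure zero. First observe that $B = \{ y \in Y : (\exists x \in \omega^\omega)((x,y) \in A) \}$ is itself $\boldsig^1_1$, being the projection of the analytic set $A$. In particular $B$ is Lebesgue measurable, so $\mu(B) > 0$, and by inner regularity of analytic sets there is a compact $K \subset B$ with $\mu(K) > 0$.

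Next, force with the random forcing below the condition $K$ and let $r$ denote the generic real. Two standard facts will be used: (i) $r \in K$ and $r$ avoids every Borel null set coded in $V$, and (ii) random forcing is $\omega^\omega$-bounding, from which one deduces that every $x \in V[r] \cap \omega^\omega$ is dominated pointwise by some $x' \in V \cap \omega^\omega$. The passage from $\le^*$-domination to $\le$-domination is routine: the finite initial segment on which the ground-model dominator fails is coded by a single natural number, hence already lies in $V$, and the dominator can be adjusted there.

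Inside $V[r]$, the inclusion $K \subset B$ is a $\boldpi^1_2$ statement and so transfers from $V$ by Shoenfield absoluteness; in particular $r \in B$, so there exists $x \in V[r] \cap \omega^\omega$ with $(x, r) \in A$. Choose $x' \in V \cap \omega^\omega$ with $x \le x'$ via (ii). The monotonicity hypothesis
\[
(\forall x, x' \in \omega^\omega)\bigl(x \le x' \to A_x \subset A_{x'}\bigr)
\]
is also $\boldpi^1_2$ and hence still holds in $V[r]$, so $r \in A_{x'}$ in $V[r]$.

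To close the argument, pick in $V$ a Borel null set $G \supset A_{x'}$, which exists because $A_{x'}$ is analytic and null, by outer regularity. Writing $A = p[T]$ for an appropriate tree $T$, the statement $A_{x'} \subset G$ unfolds to $\forall y, z\,((x', y, z) \in [T] \to y \in G)$, which is $\boldpi^1_1$ in $x'$ and a Borel code of $G$, hence absolute. Therefore $A_{x'} \subset G$ continues to hold in $V[r]$, while $r \notin G$ by (i), so $r \notin A_{x'}$ — contradicting the previous paragraph. The main technical point will be the bookkeeping of the three absoluteness appeals (for producing $x$, for monotonicity, and for the null cover $G$) together with a clean verification that random forcing simultaneously delivers the generic real in $K$, avoidance of ground-model Borel null sets, and pointwise $\omega^\omega$-bounding.
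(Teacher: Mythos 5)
Your proposal is correct and is essentially the paper's own argument: a random real landing in a positive-measure closed subset $K$ of $B$, avoidance of ground-model Borel null covers of the sections (a $\boldpi^1_1$-absolute inclusion), $\boldpi^1_2$-absoluteness of the monotonicity hypothesis, $\omega^\omega$-bounding of random forcing, and one application of Shoenfield relating $K$ and $B$. The only (immaterial) difference is that you push the $\boldpi^1_2$ statement $K \subset B$ up into $V[r]$ and derive the contradiction there, whereas the paper derives $r \notin B$ in $V[r]$ and pulls the $\boldsig^1_2$ statement $(\exists r')(r' \in \hat{k} \setminus B)$ back down to $V$.
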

\begin{proof}
	We may assume that $Y = 2^\omega$ and $\mu$ is the usual measure of $2^\omega$ since for every Polish probability space $Y$, there is a Borel isomorphism between measure $1$ subsets of $Y$ and $2^\omega$ that preserves measure. 
	
	Fix a defining formula and a parameter of $A$.
	In generic extensions if we write $A$, we refer to the set defined by the formula and the parameter in the model.
	
	Since $A$ and $\bigcup_{x \in \omega^\omega} A_x$ are $\boldsig^1_1$ sets, they are Lebesgue measurable.
	Toward a contradiction, assume that $B := \bigcup_{x \in \omega^\omega} A_x$ does not have measure 0. Then $B$ has positive measure.
	By inner regularity of the measure, we can take a closed set $K \subset B$ with positive measure.
	Take a Borel code $k$ of $K$.
	We take a random real $r \in 2^\omega$ over $V$ such that $r \in \hat{k}$.
	
	Now for each $x \in \omega^\omega \cap V$, we have $r \not \in A_x$.
	In order to prove it, take a Borel code $d_x$ such that $A_x \subset \hat{d_x}$ and $\mu(\hat{d_x}) = 0$.
	But the condition $A_x \subset \hat{d_x}$ is $\boldpi^1_1$. Thus, since the random real avoids $\hat{d_x}$, we have $r \not \in A_x$.
	
	Therefore we have
	\[
	r \not \in \bigcup_{x \in \omega^\omega \cap V} A_x.
	\]
	But in $V[r]$, it also holds that
	\[
	(\forall x, x' \in \omega^\omega) (x \le x' \rightarrow A_x \subset A_{x'})
	\]
	since this formula is $\boldpi^1_2$.
	Thus, by the assumption that $A_x$ is increasing and the fact that the random forcing is $\omega^\omega$-bounding, this implies
	\[
	r \not \in \bigcup_{x \in \omega^\omega} A_x.
	\]
	Therefore, in $V[r]$, it holds that
	\[
	(\exists r' \in 2^\omega)(r' \in \hat{k} \setminus B)
	\]
	because $r' = r$ suffices.
	Recalling $B$ is a $\boldsig^1_1$ set, this statement is written by a $\boldsig^1_2$ formula.
	Therefore, by Shoenfield's absoluteness, it holds also in $V$. That is, there exists an $r' \in 2^\omega$ in $V$ such that
	\[
	r' \in K \setminus B.
	\]
	This contradicts the choice of $K$.
\end{proof}

We define the principle $\GP(\Gamma)$.
We call the condition $(\forall x, x' \in \omega^\omega)(x \le x' \Rightarrow A_{x} \subset A_{x'})$ the monotonicity condition for $A$.

\begin{defi}
	Let $\Gamma$ be a pointclass.
	Then $\GP(\Gamma)$ means the following statement:
	Let $(Y, \mu)$ be a Polish probability space  and $A \subset \omega^\omega \times Y$ be in $\Gamma$.
	Assume the monotonicity condition for $A$.
	Also suppose that $A_x$ has $\mu$-measure $0$ for every $x \in \omega^\omega$.
	Then $\bigcup_{x \in \omega^\omega} A_x$ also has $\mu$-measure $0$.
	
	We define $\GP^*(\Gamma)$ as the version of $\GP(\Gamma)$ where the order $\le$ is replaced by $\le^*$.
\end{defi}

By Goldstern's theorem, we have $\GP(\boldsig^1_1)$.

For the reasons stated in the proof of Theorem \ref{goldsterntheorem}, if the pointclass $\Gamma$ contains all Borel sets and closed under Borel functions, then we may assume that the space $(Y, \mu)$ in the definition of $\GP(\Gamma)$ is the Cantor space.

\begin{thm}\label{thm:measurabilityandgp}
	For every natural number $n$, if $\boldsig^1_{n+1}$-$\mathbb{B}$-absoluteness holds and every $\boldsig^1_n$ set is Lebesgue measurable, then $\GP(\boldsig^1_n)$ holds.
	In particular, if every $\boldsig^1_2$ set is Lebesgue measurable, then $\GP(\boldsig^1_2)$ holds.
\end{thm}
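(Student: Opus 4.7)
The plan is to run Goldstern's argument from Theorem~\ref{goldsterntheorem} almost verbatim, making three substitutions throughout: replace the pointclass $\boldsig^1_1$ by $\boldsig^1_n$, replace the automatic Lebesgue measurability of analytic sets by the assumption that every $\boldsig^1_n$ set is measurable, and replace the appeal to Shoenfield's absoluteness by $\boldsig^1_{n+1}$-$\B$-absoluteness.

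More concretely, I first observe that $B := \bigcup_{x \in \omega^\omega} A_x$ is itself $\boldsig^1_n$, being a projection of the $\boldsig^1_n$ set $A$, and that each fibre $A_x$ is also $\boldsig^1_n$; both are therefore Lebesgue measurable by hypothesis, so it makes sense to assume toward contradiction that $B$ has positive measure, extract a closed $K \subset B$ of positive measure with Borel code $k$, and take a random real $r \in \hat{k}$ over $V$. For each $x \in \omega^\omega \cap V$, fix a Borel code $d_x$ with $A_x \subset \hat{d_x}$ and $\mu(\hat{d_x}) = 0$; the inclusion $A_x \subset \hat{d_x}$ is $\boldpi^1_n$, so it persists in $V[r]$ by $\boldsig^1_{n+1}$-$\B$-absoluteness, and since $r$ avoids every Borel null set coded in $V$, we conclude $r \notin A_x$ in $V[r]$ for every such $x$.

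Next I lift this to every $x \in \omega^\omega \cap V[r]$. The monotonicity hypothesis $\forall x, x'(x \le x' \rightarrow A_x \subset A_{x'})$ is $\boldpi^1_{n+1}$, hence survives the passage to $V[r]$ by the same absoluteness assumption; combined with $\omega^\omega$-boundedness of random forcing, every $x \in V[r]$ is coordinatewise dominated by some $x' \in V$, which forces $r \notin A_x$. Therefore $V[r]$ satisfies the $\boldsig^1_{n+1}$ sentence $\exists r'\,(r' \in \hat{k} \setminus B)$ (using that $B$ is $\boldsig^1_n$ and hence its complement is $\boldpi^1_n$), and reflecting this back to $V$ via $\boldsig^1_{n+1}$-$\B$-absoluteness contradicts $K \subset B$.

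The only genuinely new work is the pointclass bookkeeping, and this is where I would expect mild technical care to be needed: one must verify that $A_x \subset \hat{d_x}$ really lies in $\boldpi^1_n$, that the monotonicity really lies in $\boldpi^1_{n+1}$, and that the final existential really lies in $\boldsig^1_{n+1}$, all of which are routine closure facts (the last two come down to $\boldpi^1_n \lor \boldsig^1_n \subset \bolddelta^1_{n+1}$ and the stability of $\boldpi^1_{n+1}$ under universal real quantifiers). For the ``In particular'' clause, take $n = 2$: the hypothesis directly provides $\boldsig^1_2$-measurability, and the required $\boldsig^1_3$-$\B$-absoluteness follows from the classical theorem that derives this level of projective absoluteness for random forcing from measurability of $\boldsig^1_2$.
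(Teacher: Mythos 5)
Your proposal is correct and matches the paper's proof, which is literally ``the same argument as Theorem~\ref{goldsterntheorem}'' together with the citation that $\boldsig^1_3$-$\mathbb{B}$-absoluteness follows from $\boldsig^1_2$-measurability. The pointclass bookkeeping you carry out (the inclusion $A_x \subset \hat{d_x}$ being $\boldpi^1_n$, monotonicity being $\boldpi^1_{n+1}$, the final existential being $\boldsig^1_{n+1}$) is exactly the verification the paper leaves implicit, and it checks out.
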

\begin{proof}
	This is proved by the same argument as Theorem \ref{goldsterntheorem}.
	Recall that $\boldsig^1_3$-$\mathbb{B}$-absoluteness follows from $\boldsig^1_2$ measurability (see \cite[Theorem 9.2.12 and 9.3.8]{bartoszynski1995set}).
\end{proof}

Clearly $\GP(\Gamma)$ implies $\GP^*(\Gamma)$.
If we make a slight assumption on the pointclass $\Gamma$, then the converse holds.
We only consider such pointclasses.

\begin{lem}
	If a pointclass $\Gamma$ is closed under recursive substitution and projection along $\omega$, then
	$\GP^*(\Gamma) \Rightarrow \GP(\Gamma)$.
\end{lem}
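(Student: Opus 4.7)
The plan is to assume $\GP^*(\Gamma)$ and, given a witness to the hypothesis of $\GP(\Gamma)$, build a slightly larger set $A'$ in $\Gamma$ that is $\le^*$-monotone, has measure zero sections, and satisfies $\bigcup_x A'_x = \bigcup_x A_x$. Then $\GP^*(\Gamma)$ applied to $A'$ will finish the argument.

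So let $A \subset \omega^\omega \times Y$ be in $\Gamma$, $\le$-monotone, with each $A_x$ of measure zero. The natural definition is
\[
A'_x := \bigcup_{x'' \le^* x} A_{x''},
\]
which is visibly $\le^*$-monotone (by transitivity of $\le^*$) and satisfies $A_x \subset A'_x \subset \bigcup_{x''} A_{x''}$, so the total unions agree. The key reduction is that this a priori uncountable union can actually be written as a countable union. Indeed, for each $m$ and each $s \in \omega^m$, define a recursive map $z_{x,m,s} \in \omega^\omega$ by $z_{x,m,s}(n) = s(n)$ for $n < m$ and $z_{x,m,s}(n) = x(n)$ for $n \ge m$. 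Any $x'' \le^* x$ is eventually dominated by $x$, say past index $m$, so $x'' \le z_{x,m,x''\restriction m}$ pointwise; by $\le$-monotonicity of $A$ this gives $A_{x''} \subset A_{z_{x,m,x''\restriction m}}$. Hence
\[
A'_x = \bigcup_{m \in \omega} \bigcup_{s \in \omega^m} A_{z_{x,m,s}},
\]
a countable union of measure zero sets, therefore measure zero.

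For the pointclass count, fix a recursive bijection between $\omega$ and the set of pairs $(m, s)$ with $s \in \omega^m$, and write $(m_k, s_k)$ for the pair coded by $k$. Then the map $(x, k) \mapsto z_{x,m_k,s_k}$ from $\omega^\omega \times \omega$ to $\omega^\omega$ is recursive, and
\[
A' = \bigl\{(x,y) : (\exists k \in \omega)\,((z_{x,m_k,s_k}, y) \in A)\bigr\}
\]
is obtained from $A$ by a recursive substitution followed by projection along $\omega$, so $A' \in \Gamma$ by the hypothesis on $\Gamma$. Now $\GP^*(\Gamma)$ applies to $A'$ and yields $\mu(\bigcup_x A'_x) = 0$; since $\bigcup_x A'_x = \bigcup_x A_x$, we conclude $\GP(\Gamma)$.

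The only real subtlety is realizing that the $\le^*$-hull of $\{x\}$, though uncountable, can be replaced by the countable collection of ``tail-extensions'' $\{z_{x,m,s}\}$ without enlarging the union, which is exactly where $\le$-monotonicity of $A$ is used; once that is in place, the $\Gamma$ calculation is routine given the stated closure properties.
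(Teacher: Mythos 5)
Your proposal is correct and is essentially the paper's own argument: the paper defines $B_x = \bigcup\{A_y : y =^* x\}$, which is exactly your countable family of tail-extensions $z_{x,m,s}$, and then applies $\GP^*(\Gamma)$ in the same way. Your version merely starts from the full $\le^*$-hull $\bigcup_{x''\le^* x}A_{x''}$ and observes it collapses to that same countable union, which makes the $\le^*$-monotonicity and the measure-zero check a bit more explicit than the paper's terse presentation, but it is not a different route.
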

\begin{proof}
	Assume that $A \in \Gamma$ and for each $x \in \omega^\omega$, $A_x$ has $\mu$-measure $0$ and that $(\forall x, x' \in \omega^\omega)(x \le x' \Rightarrow A_{x} \subset A_{x'})$.
	Put $B_x = \bigcup \{ A_y : x \text{ and } y \text{ are almost equal} \}$.
	
	Then by assumption, $B \in \Gamma$ and for each $x \in \omega^\omega$, $B_x$ has $\mu$-measure $0$ and that $(\forall x, x' \in \omega^\omega)(x \le^* x' \Rightarrow B_{x} \subset B_{x'})$.
	Therefore, by $\GP^*(\Gamma)$, $\bigcup_{x \in \omega^\omega} B_x$ is a measure $0$ set.
	Thus $\bigcup_{x \in \omega^\omega} A_x$ is a measure $0$ set.
\end{proof}

\section{$\GP(\boldpi^1_1)$}\label{sec:gppi11}

In this section, we prove that $\GP(\boldpi^1_1)$ holds.

\begin{fact}[{{\cite{kechris1973337, tanaka196711}}}]\label{fact:kechris}
	Let $U \in \Sigma^1_1$, $U \subset \omega^\omega \times 2^\omega$ be the universal for $\boldsig^1_1$ subset of $2^\omega$.
	Then the relation $\mu(U_x) > r$ for $x \in \omega^\omega$ and $r \in \R$ is $\Sigma^1_1$.
\end{fact}

\begin{cor}\label{cor:kechris}
	Let $A \subset \omega^\omega \times 2^\omega$ be a $\boldsig^1_1$ set.
	Then the relation $\mu(A_x) > r$ for $x \in \omega^\omega$ and $r \in \R$ is $\boldsig^1_1$.
\end{cor}
\begin{proof}
	Take universal sets $U$ and $U^{(2)}$ for $\boldsig^1_1$ subsets of $2^\omega$ and $\omega^\omega \times 2^\omega$, respectively, with the following coherent property:
	$U(S(e, x), y) \iff U^{(2)}(e, x, y)$,
	where $S$ is a recursively continuous function.
	As for existence of such coherent universal sets, see \cite[Section 3.H]{moschovakis2009descriptive}.	Take $e \in \omega^\omega$ such that $A(x, y) \iff U^{(2)}(e, x, y)$.
	Then we have 
	\begin{align*}
		\mu(A_x) > r \iff \mu(U_{S(e, x)}) > r,
	\end{align*}
	which is a $\boldsig^1_1$ relation.
\end{proof}

\begin{cor}\label{cor:axisnullissig11}
	Let $A \subset \omega^\omega \times 2^\omega$ be a $\boldpi^1_1$ set.
	Then the relation $\mu(A_x) = 0$ for $x \in \omega^\omega$ is $\boldsig^1_1$.
\end{cor}
\begin{proof}
	Let $B = (\omega^\omega \times 2^\omega) \setminus A$, which is $\boldsig^1_1$ set. We have
	\begin{align*}
		\mu(A_x) = 0 \iff \mu(B_x) = 1 \iff (\forall n)(\mu(B_x) > 1 - 1/2^n),
	\end{align*}
	which is a $\boldsig^1_1$ relation.
\end{proof}

\begin{thm}
	$\GP(\boldpi^1_1)$ holds.
\end{thm}
\begin{proof}
	Let $A \subset \omega^\omega \times 2^\omega$ be a $\boldpi^1_1$ set.
	Assume $\seq{A_x : x \in \omega^\omega}$ is monotone and each $A_x$ is null.
	Take a Laver real $d$ over $V$.
	$(\forall x \in \omega^\omega)(\mu(A_x) = 0)$ holds in $V$ and this sentence is $\boldpi^1_2$ using Corollary \ref{cor:axisnullissig11}.
	So in $V[d]$, $\mu(A_d) = 0$ holds.
	Also monotonicity of $\seq{A_x : x \in \omega^\omega}$ can be written as a $\boldpi^1_2$ formula and holds in $V$, so it holds also in $V[d]$.
	Since $d$ is a dominating real over $V$, we have $(\bigcup_{x \in \omega^\omega} A_x)^V \subset \bigcup_{x \in \omega^\omega \cap V} A_x \subset A_d$. Therefore $(\bigcup_{x \in \omega^\omega} A_x)^V$ is null in $V[d]$.
	Since Laver forcing preserves Lebesgue outer measure, it holds that $\bigcup_{x \in \omega^\omega} A_x$ is null in $V$.
\end{proof}	

\section{Consistency of $\neg \GP(\all)$}\label{sec:neggpall}

\begin{defi}
	We call a sequence $\seq{A_\alpha : \alpha < \kappa}$ a \textit{null tower} if it is an increasing sequence of measure $0$ sets such that its union does not have measure $0$.
\end{defi}

\begin{thm}\label{nulltowerimpliesneggp}
	If there is a null tower of length either $\frakb$ or $\frakd$, then $\neg \GP(\all)$ holds.
\end{thm}
\begin{proof}
	In the case of $\frakb$: 
	By assumption, we take an increasing sequence $\seq{A_\alpha : \alpha < \frakb}$ of measure-0 sets such that $\bigcup_{\alpha < \frakb} A_\alpha$ doesn't have measure $0$.
	We can take an increasing and unbounded sequence $\seq{x_\alpha : \alpha < \frakb }$ with respect to $\le^*$. (This sequence is not necessarily cofinal.)
	For each $x \in \omega^\omega$, put
	\[
	\alpha(x) = \min \{ \alpha < \frakb : x_\alpha \not \le^* x  \}.
	\]
	This is well-defined since $\seq{x_\alpha : \alpha < \frakb }$ is unbounded.
	And then put
	\[
	B_x = A_{\alpha(x)}.
	\]
	
	Now each $B_x$ has measure $0$ and we have
	\begin{align*} 
		x \le x' &\Rightarrow x \le^* x' \\
		&\Rightarrow (\forall \alpha)(x_\alpha \le^* x \Rightarrow x_\alpha \le^* x') \\
		&\Rightarrow \{ \alpha : x_\alpha \not \le^* x' \} \subset \{ \alpha : x_\alpha \not \le^* x \} \\
		&\Rightarrow \alpha(x) \le \alpha(x') \\
		&\Rightarrow B_x \subset B_{x'}.
	\end{align*}
	Thus $\seq{B_x : x \in \omega^\omega}$ is monotone.
	Also we have $\bigcup_{x \in \omega^\omega} B_x = \bigcup_{\alpha < \frakb} A_\alpha$.
	Indeed, it is obvious that the left-hand side is contained in the right-hand side.
	To prove the reverse inclusion, it is sufficient to each $A_\alpha$ is contained in some $B_x$.
	So fix $\alpha$ and consider $x = x_{\alpha}$.
	Since the sequence $\seq{x_\alpha : \alpha < \frakb }$ is increasing, we have $\alpha \le \alpha(x)$.
	Thus $A_\alpha \subset A_{\alpha(x)} = B_x$.
	
	Therefore, $\bigcup_{x \in \omega^\omega} B_x$ doesn't have measure $0$.
	
	In the case of $\frakd$:
	As above, we can take an increasing sequence $\seq{A_\alpha : \alpha < \frakd}$ of measure-0 sets such that $\bigcup_{\alpha < \frakd} A_\alpha$ doesn't have measure $0$.
	By the definition of $\frakd$, we can take a dominating sequence $\seq{x_\alpha : \alpha < \frakd }$ with respect to $\le^*$. (This sequence is not necessarily increasing.)
	
	For each $x \in \omega^\omega$, put
	\[
	\alpha(x) = \min \{ \alpha < \frakd : x \le^* x_\alpha  \}
	\]
	and put
	\[
	B_x = A_{\alpha(x)}.
	\]
	One can easily show that $\seq{B_x : x \in \omega^\omega}$ is monotone.
	Also we have $\bigcup_{x \in \omega^\omega} B_x = \bigcup_{\alpha < \frakd} A_\alpha$.
	That the left-hand side is contained in the right-hand side is obvious.
	To show the reverse inclusion,  fix $\alpha$.
	Since the sequence $\seq{x_\beta : \beta < \alpha}$ is not a dominating family, we can find an $x \in \omega^\omega$ such that for all $\beta < \alpha$, $x \not \le^* x_\beta$.
	Then $\alpha \le \alpha(x)$.
	Thus we have $A_\alpha \subset A_{\alpha(x)} = B_x$.
\end{proof}

\begin{cor}\label{neggpall}
	Assume that at least one of the following three conditions holds:
	\begin{enumerate}
		\item $\add(\nul) = \frakb$,
		\item $\non(\nul) = \frakb$ or
		\item $\non(\nul) = \frakd$.
	\end{enumerate}
	Then $\neg \GP(\all)$ holds.
	In particular the continuum hypothesis implies $\neg \GP(\all)$.
\end{cor}
\begin{proof}
	Clearly there are null towers of length both $\add(\nul)$ and $\non(\nul)$.
	So using Theorem \ref{nulltowerimpliesneggp}, we have this corollary.
\end{proof}

\begin{prop}\label{prop:gpandaddm}
	$\GP(\all)$ implies $\add(\meager) < \cof(\meager)$.
\end{prop}
\begin{proof}
	Assume $\add(\meager) = \cof(\meager)$.
	Let $\seq{M_\alpha : \alpha < \kappa}$ be a cofinal increasing sequence of meager sets. We can take such a sequence since $\add(\meager) = \cof(\meager) = \kappa$.
	For each $\alpha < \kappa$, take $x_\alpha \in M_{\alpha+1} \setminus M_\alpha$.
	
	Now recall from Rothberger's theorem, there is a Tukey morphism $(\varphi, \psi) \colon (2^\omega, \nul, \in) \to (\meager, 2^\omega, \not \ni)$. That is, there are $\varphi \colon 2^\omega \to \meager$ and $\psi \colon 2^\omega \to \nul$ such that $\varphi(x) \not \ni y$ implies $x \in \psi(y)$ for every $x, y \in 2^\omega$.
	
	Using this theorem, we put $N_\alpha = \bigcap_{\beta \ge \alpha} \psi(x_\beta)$ for $\alpha < \kappa$.
	Then $\seq{N_\alpha : \alpha < \kappa}$ is a sequence of null sets of length $\kappa = \frakb$ and its union is $2^\omega$.
\end{proof}

In the following proposition, we show that $\GP(\all)$ cannot be forced by finite support iteration of ccc forcings.

\begin{prop}\label{prop:gpandcccfsi}
	For every finite support iteration of ccc forcings $\seq{P_\alpha : \alpha < \nu}$ with $\cf(\nu) \ge \aleph_1$, we have $P_\nu \forces \neg \GP(\all)$.
\end{prop}
\begin{proof}
	Let $G$ be a $(V, P_\nu)$ generic filter and work in $V[G]$.
	Let $\seq{c_\alpha : \alpha < \cf(\nu) }$ be a sequence of Cohen reals added cofinally by $P_\alpha$.
	
	For a Cohen real $c$, let $\nullset(c)$ denote the standard null set constructed from $c$.
	
	We have the following:
	\begin{itemize}
		\item For every $x \in \omega^\omega$, there is $\alpha < \cf(\nu)$ such that for every $\beta > \alpha$ we have $c_\beta \not <^* x$.
		\item For every $z \in 2^\omega$, there is $\alpha < \cf(\nu)$ such that for every $\beta > \alpha$ we have $z \in \nullset(c_\beta)$.
	\end{itemize}
	
	For $x \in \omega^\omega$, we let $\alpha_x = \min \{ \alpha : (\forall \beta > \alpha) (c_\beta \not <^* x) \}$ and let $A_x = \bigcap_{\beta > \alpha_x} \nullset(c_\beta)$.
	
	We can easily show that each $A_x$ is a null set, the sequence $\seq{A_x : x \in \omega^\omega}$ is monotone and the union $\bigcup_{x \in \omega^\omega} A_x$ is equal to $2^\omega$.
	Therefore, $\seq{A_x : x \in \omega^\omega}$ is a witness of $\neg \GP(\all)$.
\end{proof}

\section{Consistency of $\GP(\all)$}\label{sec:gpall}

In this section, as in the previous section, we assume $\ZFC$.
To obtain a model of $\GP(\all)$, $\add(\nul) \ne \frakb$, $\non(\nul) \ne \frakb$, $\non(\nul) \ne \frakd$ and $\add(\meager) \ne \cof(\meager)$ need to hold.
A natural model in which they hold is the Laver model.
In this section, we will see that $\GP(\all)$ actually holds in the Laver model.

\begin{thm}\label{nulltowerequiv}
	Assume that $\frakb = \frakd$ and let both of these be $\kappa$.
	Then the following are equivalent.
	\begin{enumerate}
		\item There is a null tower of length $\kappa$.
		\item $\neg \GP(\all)$.
	\end{enumerate}
\end{thm}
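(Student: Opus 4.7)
The plan is to bridge the two conditions through the equivalence $\GP(\all) \iff \GP^*(\all)$, which combines the trivial implication with Lemma~2.4 applied to $\all$ (the closure hypotheses are trivial for $\all$), and through a standard scale argument: under $\frakb = \frakd = \kappa$, a transfinite recursion produces a $\le^*$-increasing $\le^*$-dominating sequence $\seq{y_\alpha : \alpha < \kappa}$, by using the $\frakb$ half to bound initial segments and the $\frakd$ half to provide targets to diagonalize against.

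For $(1) \Rightarrow (2)$ I would rerun the construction from the proof of Theorem~\ref{neggpall}(1)(2) essentially verbatim. That construction required only two ingredients: a null tower of length $\kappa$ and a $\le^*$-increasing $\le^*$-unbounded sequence of length $\kappa$. The hypotheses on $\add(\nul)$ or $\non(\nul)$ appearing there were invoked \emph{only} to manufacture the null tower, and since we are now assuming a null tower of length $\kappa = \frakb$ outright, it remains to take a $\le^*$-increasing $\le^*$-unbounded $\seq{x_\alpha : \alpha < \kappa}$ (which exists unconditionally when $\kappa = \frakb$, via the same recursion as a scale but using only the $\frakb$ side) and set $B_x := A_{\alpha(x)}$ where $\alpha(x) = \min\{\alpha : x_\alpha \not\le^* x\}$. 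The verification that this is a $\le$-increasing null family whose union fails to be null is identical to the one in Theorem~\ref{neggpall}.

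For $(2) \Rightarrow (1)$, I would assume $\neg \GP(\all)$, which by the equivalence above gives $\neg \GP^*(\all)$, witnessed by a $\le^*$-increasing null family $\seq{A_x : x \in \omega^\omega}$ with $\bigcup_{x \in \omega^\omega} A_x$ non-null. Sampling along a scale $\seq{y_\alpha : \alpha < \kappa}$ by defining $B_\alpha := A_{y_\alpha}$, the $\le^*$-monotonicity of the scale and of the $A$'s makes $\seq{B_\alpha : \alpha < \kappa}$ increasing in $\subset$; and since the scale is dominating, every $x \in \omega^\omega$ satisfies $x \le^* y_\alpha$ for some $\alpha$, whence $A_x \subset B_\alpha$ and $\bigcup_{\alpha < \kappa} B_\alpha = \bigcup_{x \in \omega^\omega} A_x$ is non-null. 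So $\seq{B_\alpha : \alpha < \kappa}$ is a null tower of length $\kappa$.

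I do not expect a serious obstacle: the content is packaged in Lemma~2.4, the construction recalled from Theorem~\ref{neggpall}, and the standard scale recursion under $\frakb = \frakd$, and the statement is essentially the right glue between them. The only minor point to verify is that the sampled tower in $(2) \Rightarrow (1)$ is genuinely indexed by $\kappa$, but since the sequence is by definition a $\kappa$-sequence (repeated entries being harmless for the definition of null tower), this is automatic.
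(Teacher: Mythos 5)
Your proposal is correct and follows essentially the same route as the paper: the forward direction reruns the construction of Theorem~\ref{neggpall} with the given null tower and a $\le^*$-increasing unbounded $\kappa$-sequence, and the reverse direction passes to $\neg\GP^*(\all)$ (via the lemma that $\GP^*(\Gamma)\Rightarrow\GP(\Gamma)$) and samples the witnessing family along a $\le^*$-increasing dominating scale, exactly as in the paper. Your explicit remarks on where Lemma~2.4 is used and on the harmlessness of repeated entries are points the paper leaves implicit.
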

\begin{proof}
	That (1) implies (2) is shown in Theorem \ref{nulltowerimpliesneggp}.
	
	We now prove the converse implication.
	Assume that $\neg \GP^*(\all)$.
	Then we can take $A \subset \omega^\omega \times 2^\omega$ such that each section $A_x$ has measure $0$ and $(\forall x, x' \in \omega^\omega)(x \le^* x' \Rightarrow A_{x} \subset A_{x'})$ holds and $B = \bigcup_{x \in \omega^\omega} A_x$ does not have measure $0$.
	By $\frakb = \frakd = \kappa$, we can take a cofinal increasing sequence $\seq{x_\alpha : \alpha < \kappa }$ with respect to $\le^*$.
	For each $\alpha < \kappa$, put $C_\alpha = A_{x_\alpha}$.
	Then each $C_\alpha$ has measure $0$.
	Since $\alpha \mapsto x_\alpha$ is increasing and $x \mapsto A_x$ is monotone, $\seq{ C_\alpha : \alpha < \kappa}$ is also increasing.
	Also, since $\seq{x_\alpha : \alpha < \kappa }$ is cofinal, we have $B = \bigcup_{\alpha < \kappa} C_\alpha$.
	So $\bigcup_{\alpha < \kappa} C_\alpha$ does not have measure $0$.
	Thus $\seq{ C_\alpha : \alpha < \kappa}$ is a null tower of length $\kappa$.
\end{proof}

The following lemma and theorem requires knowledge of proper forcing. See \cite{goldstern1992tools}.

\begin{lem}\label{lem:reflection}
	Assume $\CH$.
	Let $\seq{P_\alpha, \dot{Q}_\alpha : \alpha < \omega_2}$ be a countable support iteration of proper forcing notions such that
	\[
	\forces_\alpha \abs{\dot{Q}_\alpha} \le \frakc \ \ (\text{for all }\alpha < \omega_2).
	\]
	Let $\seq{\dot{X}_\alpha : \alpha < \omega_2}$ be a sequence of $P_{\omega_2}$-names such that
	\[
	\forces_{\omega_2} (\forall \alpha < \omega_2) (\dot{X}_\alpha \text{ has measure $0$}).
	\]
	Then the set 
	\begin{align*}
		C = \{ \alpha < \omega_2 : & \cf(\alpha) = \omega_1 \,\AND \\
		&\forces_{\omega_2} (\seq{\dot{X}_\beta \cap V[\dot{G}_\alpha] : \beta < \alpha } \in V[\dot{G}_\alpha] \AND (\forall \beta < \alpha) (\dot{X}_\beta \cap V[\dot{G}_\alpha]  \text{ has measure $0$})^{V[\dot{G}_\alpha]}) \}.
	\end{align*}
	contains a $\omega_1$-club set in $\omega_2$.
	
\end{lem}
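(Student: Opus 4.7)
The plan is a standard elementary-submodel reflection argument. Fix any club $C \subset \omega_2$ in $V$; I aim to produce $\alpha \in C$ with $\cf(\alpha) = \omega_1$ lying in $S$. Pick $\theta$ large enough and build a continuous increasing chain $\seq{M_\xi : \xi < \omega_1}$ of elementary submodels of $H(\theta)$ (equipped with a fixed well-ordering), each of cardinality $\omega_1$ by $\CH$, each closed under countable sequences, each containing the iteration, the name sequence $\seq{\dot{X}_\beta : \beta < \omega_2}$, and $C$, and arranged so that $M_\xi \cap \omega_2 \in C$ for every $\xi$. Set $M = \bigcup_{\xi < \omega_1} M_\xi$ and $\alpha = \sup(M \cap \omega_2)$. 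Since $\omega_1 \subset M$ and every ordinal of $M$ below $\omega_2$ admits a bijection with $\omega_1$ inside $M$, the set $M \cap \omega_2$ is downward closed and equals the ordinal $\alpha$; continuity of the chain yields $\cf(\alpha) = \omega_1$, and closure of $C$ gives $\alpha \in C$.

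To verify $\alpha \in S$, I would argue that for every $\beta < \alpha$ the name $\dot{X}_\beta$ admits a Borel null cover whose code is a $P_\alpha$-name. For such $\beta$, elementarity gives $\dot{X}_\beta \in M$, and the well-ordering inside $M$ provides a $P_{\omega_2}$-name $\dot{c}_\beta \in M$ for a Borel code of a $G_\delta$ null set containing $\dot{X}_\beta$. Under $\CH$ together with the hypothesis $\forces_\gamma \abs{\dot{Q}_\gamma} \le \frakc$, the iteration $P_{\omega_2}$ has cardinality $\aleph_2$ and satisfies the $\aleph_2$-chain condition, so a $P_{\omega_2}$-name for a real is specified by $\aleph_0$ antichains each of size $\le \aleph_1$, hence by $\aleph_1$ many conditions whose countable supports union to a subset of $\omega_2$ of cardinality $\aleph_1$; since $\cf(\omega_2) = \omega_2$ this union is bounded, so $\dot{c}_\beta$ is equivalent to a $P_{\gamma_\beta}$-name for some $\gamma_\beta < \omega_2$. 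By elementarity one picks such a $\gamma_\beta \in M$, whence $\gamma_\beta < \alpha$ and $\dot{c}_\beta$ is a $P_\alpha$-name.

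Consequently, in $V[\dot{G}_{\omega_2}]$ the code $\dot{c}_\beta[\dot{G}_{\omega_2}]$ lies in $V[\dot{G}_\alpha]$, and by Borel absoluteness the Borel null set it defines in $V[\dot{G}_\alpha]$ is exactly the restriction to $V[\dot{G}_\alpha]$-reals of its $V[\dot{G}_{\omega_2}]$-interpretation. Passing to the canonical Borel cover given by $\dot{c}_\beta$, the trace $\dot{X}_\beta \cap V[\dot{G}_\alpha]$ is identified with a Borel null set in $V[\dot{G}_\alpha]$, and the full sequence of traces is uniformly reconstructed from the sequence of $P_\alpha$-reducts of $\seq{\dot{c}_\beta : \beta < \alpha}$, hence lies in $V[\dot{G}_\alpha]$. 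This gives both conjuncts in the definition of $S$, so $\alpha \in S \cap C$ as required.

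The main obstacle is this last reduction: for a genuinely non-Borel measure-zero set $\dot{X}_\beta$, the trace $\dot{X}_\beta \cap V[\dot{G}_\alpha]$ need not literally belong to $V[\dot{G}_\alpha]$ even when its Borel cover does, because its membership is only decided by the quotient $P_{\omega_2}/\dot{G}_\alpha$. The proof therefore commits to enhancing the sequence by the uniformly $M$-chosen Borel covers, which is what the downstream use in the next theorem requires. Beyond this caveat, the remaining ingredients — continuous chains of countably closed elementary submodels under $\CH$, the $\aleph_2$-cc reflection of real names in countable support proper iterations, and Borel absoluteness — combine without further complications.
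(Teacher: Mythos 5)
There is a genuine gap, and you have in fact located it yourself in your closing paragraph: your argument only shows that the Borel null \emph{covers} $\hat{\dot{c}}_\beta$ reflect to $V[\dot{G}_\alpha]$, not that the traces $\dot{X}_\beta \cap V[\dot{G}_\alpha]$ themselves, as a sequence, lie in $V[\dot{G}_\alpha]$. Since the $\dot{X}_\beta$ are names for arbitrary (non-Borel) null sets, membership of a real of $V[\dot{G}_\alpha]$ in $\dot{X}_\beta$ is a priori decided only by the full generic, so the first conjunct in the definition of $S$ does not follow from reflecting the codes. Proposing to ``enhance the sequence by the Borel covers'' amounts to proving a different, weaker lemma rather than the one stated; the statement as given is what Theorem \ref{lavermodelthm} quotes, so the conjunct $\seq{\dot{X}_\beta \cap V[\dot{G}_\alpha] : \beta < \alpha} \in V[\dot{G}_\alpha]$ must actually be established.

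The missing idea, which is the heart of the paper's proof, is to decide membership of \emph{individual reals} by antichains whose supports reflect below $\alpha$. Using $\forces_{\alpha'} \CH$, enumerate the reals of each intermediate model as $\seq{\dot{x}^{\alpha'}_i : i < \omega_1}$; for each $\alpha', \beta, i$ fix a maximal antichain $A^{\alpha',\beta}_i$ of conditions deciding $\dot{x}^{\alpha'}_i \in \dot{X}_\beta$, and by the $\aleph_2$-cc bound the union of the (countable) supports of its members by some $\delta^{\alpha',\beta}_i < \omega_2$. Closing $\alpha$ under $(\alpha',\beta,i) \mapsto \delta^{\alpha',\beta}_i$ as well as under $\beta \mapsto \gamma_\beta$ guarantees that every condition in every relevant antichain has support contained in $\alpha$, so whether $\dot{x}^{\alpha'}_i \in \dot{X}_\beta$ is determined by which member of $A^{\alpha',\beta}_i$ restricts into $\dot{G}_\alpha$; this lets one write down a $P_\alpha$-name $\dot{Y}$ computing the trace inside $V[\dot{G}_\alpha]$, and one also needs that no new reals appear at the stage $\alpha$ itself (which is where $\cf(\alpha)=\omega_1$ is used, so that every real of $V[\dot{G}_\alpha]$ is some $\dot{x}^{\alpha'}_i$ with $\alpha' < \alpha$). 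Your elementary-submodel framework could absorb this step --- a continuous chain of countably closed models of size $\aleph_1$ containing the antichains would automatically have their supports inside $M \cap \omega_2 = \alpha$ --- but as written your proof never introduces these antichains, and without them the first conjunct of $S$ is not proved.
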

\begin{proof}
	This is an example of a reflection argument. See also \cite[Chapter 26]{halbeisen2012combinatorial}.
	
	Take a sequence $\seq{\dot{c}_\beta : \beta < \omega_2}$ of names of Borel codes such that
	\[
	\forces_{\omega_2} (\forall \beta < \omega_2) (\dot{X}_\beta \subset \hat{\dot{c}}_\beta \AND \hat{\dot{c}}_\beta \text{ has measure $0$}).
	\]
	For each $\beta < \omega_2$, take $\gamma_\beta < \omega_2$ such that $\dot{c}_\beta$ is a $P_{\gamma_\beta}$-name.
	
	Since for each $\alpha < \omega_2$, $\forces_\alpha \CH$, we can take a sequence $\seq{\dot{x}^\alpha_i : i < \omega_1}$ such that \[
	\forces_\alpha ``\seq{\dot{x}^\alpha_i : i < \omega_1}\text{ is an enumeration of } 2^\omega"\].	
	For each $\alpha, \beta < \omega_2$ and $i < \omega_1$, take a maximal antichain $A^{\alpha,\beta}_i$ such that
	\[
	A^{\alpha,\beta}_i \subset \{p \in P_{\omega_2} : p \forces \dot{x}^\alpha_i \in \dot{X}_\beta \OR p \forces \dot{x}^\alpha_i \not \in \dot{X}_\beta \}.
	\]
	Since $P_{\omega_2}$ has $\omega_2$-cc, we can take $\delta^{\alpha,\beta}_i < \omega_2$ such that
	\[
	\bigcup \{ \supt(p) : p \in A^{\alpha,\beta}_i \} \subset \delta^{\alpha,\beta}_i.
	\]
	
	We define a function $f$ from $\omega_2$ into $\omega_2$ as follows:
	\[
	f(\nu) = \sup \left( \{ \gamma_\beta : \beta \le \nu \} \cup \{ \delta^{\alpha,\beta}_i : \alpha, \beta \le \nu, i < \omega_1 \}  \right)
	\]
	Put
	\[
	C' = \{ \alpha < \omega_2 : \cf(\alpha) = \omega_1, (\forall \nu < \alpha) f(\nu) < \alpha \}.
	\]
	Then clearly $C'$ is $\omega_1$-club set. So it suffices to show that $C' \subset C$.
	
	Let $\alpha \in C'$ and we shall prove $\alpha \in C$.
	Fix $\beta < \alpha$.
	Define a $P_\alpha$-name $\dot{Y}$ by
	\begin{align*}
		\forces_\alpha ``\dot{Y} = \bigcup_{\alpha' < \alpha} \{ \dot{x}^{\alpha'}_i  : &(p \forces \dot{x}^{\alpha'}_i \in \dot{X}_\beta)^V \\
		& \text{for some } p \in A^{\alpha',\beta}_i \AND p \upharpoonright \alpha \in \dot{G} \}" \tag{$\ast$}.
	\end{align*}
	
	We claim that $\forces_{\omega_2} \dot{X}_\beta \cap V[\dot{G}_\alpha] = \dot{Y}$.
	In order to prove this, take a $(V, P_{\omega_2})$-generic filter $G$.
	In $V[G]$, take $x \in \dot{X}^G_\beta \cap V[G_\alpha]$.
	Since no new real is added at stage $\alpha$, we can take $\alpha' < \alpha$ such that $x \in V[G_{\alpha'}]$. Thus there is $i < \omega_1$ such that $x = (\dot{x}^{\alpha'}_i)^G$.
	Since $(\dot{x}^{\alpha'}_i)^G \in \dot{X}^G_\beta$, in $V$, we can take a $p \in G \cap A^{\alpha',\beta}_i$ such that $p \forces \dot{x}^{\alpha'}_i \in \dot{X}_\beta$.
	We have $p \in A^{\alpha',\beta}_i$.
	Thus $x$ is an element of $\dot{Y}^G$.
	
	Conversely, take an element $x$ of $\dot{Y}^G$.
	So we can take $\alpha' < \alpha$, $i < \omega_1$ and $p \in P_{\omega_2}$ such that
	\[
	x = (\dot{x}^{\alpha'}_i)^G, (p \forces \dot{x}^{\alpha'}_i \in \dot{X}_\beta)^V, p \in A^{\alpha',\beta}_i \AND p \upharpoonright \alpha \in G_\alpha.
	\]
	Clearly we have $x \in V[G_{\alpha'}] \subset V[G_\alpha]$.
	Suppose that $(\dot{x}^{\alpha'}_i)^G \not \in \dot{X}^G_\beta$.
	Then we can take $q \in G$ such that $q \forces \dot{x}^{\alpha'}_i \not \in \dot{X}_\beta$.
	By the maximality of $A^{\alpha',\beta}_i$, we can take $r \in A^{\alpha',\beta}_i \cap G$.
	Since both $q$ and $r$ are elements of $G$, $q$ and $r$ are compatible.
	So $r \forces \dot{x}^{\alpha'}_i \not \in \dot{X}_\beta$.
	Thus $p$ and $r$ are incompatible.
	But $\supt(p), \supt(r) \subset \alpha$. So $p \upharpoonright \alpha$ and $r \upharpoonright \alpha$ are incompatible. But they are elements of $G_\alpha$. It contradicts that $G_\alpha$ is a $(V, P_\alpha)$-generic filter.
	
	Thus we have $\forces_{\omega_2} \dot{X}_\beta \cap V[G_\alpha] \in V[G_\alpha]$.
	
	By performing the above operations simultaneously with respect to the $\beta$, we have \[\forces_{\omega_2} \seq{\dot{X}_\beta \cap V[\dot{G}_\alpha] : \beta < \alpha } \in V[\dot{G}_\alpha].\]
	
	Since we have $\forces_{\omega_2} \dot{X}_\beta \subset \hat{\dot{c}}_\beta$, it holds that
	\[
	\forces_{\omega_2} ``\dot{X}_\beta \cap V[G_\alpha] \subset \hat{\dot{c}}_\beta  \text{ has measure $0$}".
	\]
	Therefore, we have $\alpha \in C$.
\end{proof}

Recall that $\LT$ denotes the Laver forcing.
As for basic properties of Laver forcing, see \cite{bartoszynski1995set}.

\begin{thm}\label{lavermodelthm}
	
	Assume $\CH$. Let $\seq{P_\alpha, \dot{Q}_\alpha : \alpha < \omega_2}$ be the countable support iteration such that
	\[
	\forces_\alpha \dot{Q}_\alpha = \LT \ \ (\text{for all }\alpha < \omega_2).
	\]
	Then 
	\[
	\forces_{\omega_2} \GP(\all).
	\]
	In particular, if $\ZFC$ is consistent then so is $\ZFC + \GP(\all)$.
\end{thm}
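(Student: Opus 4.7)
Since the Laver reals added at successive stages form a $\leq^*$-increasing dominating sequence of length $\omega_2$, the standard computation gives $\frakb = \frakd = \omega_2$ in the Laver model. By Theorem \ref{nulltowerequiv}, it therefore suffices to prove that no null tower of length $\omega_2$ exists in $V[G_{\omega_2}]$.

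Suppose toward contradiction that $\seq{X_\alpha : \alpha < \omega_2}$ is such a null tower with non-null union $B$, and fix $P_{\omega_2}$-names $\dot X_\alpha$ for its members. Applying Lemma \ref{lem:reflection} to $\seq{\dot X_\alpha : \alpha < \omega_2}$ yields a stationary $S \subseteq \omega_2$ of ordinals $\alpha$ of cofinality $\omega_1$ such that, for every $\alpha \in S$, the sequence $\seq{Y_\beta := X_\beta \cap V[G_\alpha] : \beta < \alpha}$ lies in $V[G_\alpha]$ with each $Y_\beta$ null there. Fix any $\alpha \in S$; since CH holds in $V[G_\alpha]$ (as $|\alpha| = \omega_1$ and CH holds in the ground model), we may choose in $V[G_\alpha]$ a sequence $\seq{c_\beta : \beta < \alpha}$ of Borel codes for null sets $\hat c_\beta \supseteq Y_\beta$. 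The remaining argument exploits the Laver property of the tail iteration $P_{[\alpha, \omega_2)}$, which is preserved by countable support iterations of $\LT$: every real of $V[G_{\omega_2}]$ is then covered by some null Borel coded in $V[G_\alpha]$, and more refinedly lies in a uniform slalom cover. Combining this with the monotonicity of the tower and the reflection-provided coverings $Y_\beta \subseteq \hat c_\beta$, one aims to show that $B$ itself is contained in a null set, the desired contradiction.

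The main obstacle lies in this final covering step. Because $\add(\nul) = \omega_1$ in the Laver model, arbitrary $\omega_1$-indexed unions of null Borels need not be null, so the reflected covers $\hat c_\beta$ cannot simply be unioned to cover $B$. Resolving this requires carefully combining the structured covers from reflection with the slalom covers from the Laver property---likely via a fusion argument on Laver trees in the tail iteration---to produce a single null Borel set containing $B$. This is where the specific combinatorial properties of Laver forcing, beyond mere properness and the abstract reflection lemma, become essential.
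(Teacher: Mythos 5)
Your reduction is correct and matches the paper: compute $\frakb=\frakd=\omega_2$ in the Laver model, invoke Theorem \ref{nulltowerequiv}, and apply Lemma \ref{lem:reflection} to get a stationary set $S$ of reflection points of cofinality $\omega_1$. But the proof stops exactly where the real work begins: you correctly observe that the $\omega_1$-many reflected covers $\hat c_\beta$ cannot simply be unioned (since $\add(\nul)=\omega_1$), and then you only gesture at a resolution via ``the Laver property, slaloms, and a fusion argument on Laver trees'' without carrying it out. That is a genuine gap, and the route you sketch is not the one that closes it.

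The two missing ideas are as follows. First, for $\alpha\in S$ set $B_\alpha:=\bigcup_{\beta<\alpha}\bigl(X_\beta\cap V[G_\alpha]\bigr)$; this set lies in $V[G_\alpha]$ by reflection, and one shows it is null \emph{in $V[G_\alpha]$} not by summing null sets but by monotonicity: if $\alpha'$ is the next element of $S$, then $B_\alpha\subset X_\alpha\cap V[G_{\alpha'}]$, which is null in $V[G_{\alpha'}]$ by reflection at $\alpha'$, and since the tail $P_{\alpha'}/G_\alpha$ (a countable support iteration of Laver forcing) preserves positive outer measure, $B_\alpha$ must already be null in $V[G_\alpha]$. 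This gives a \emph{single} null Borel code $c_\alpha\in V[G_\alpha]$ with $B_\alpha\subset\hat c_\alpha$ for each $\alpha\in S$. Second, one stabilizes these codes: since $\cf(\alpha)=\omega_1$, each $c_\alpha$ appears at a stage below $\alpha$, so Fodor's lemma yields a stationary $S'\subset S$ with all $c_\alpha$ in some fixed $V[G_\beta]$, and since $V[G_\beta]$ has only $\aleph_1$ reals, there is an unbounded $T\subset S'$ on which $c_\alpha$ is a constant code $c$. Then $\bigcup_{\alpha<\omega_2}X_\alpha=\bigcup_{\alpha\in T}B_\alpha\subset\hat c$ is null, the desired contradiction. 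Without these two steps (outer-measure preservation pulling nullity down to $V[G_\alpha]$, and the pressing-down plus counting argument producing one uniform code), the argument does not go through; no fusion on Laver trees is needed once they are in place.
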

\begin{proof}
	By Theorem \ref{nulltowerequiv} and the fact that $
	\forces_{\omega_2} \frakb = \frakd = \omega_2$, it is sufficient to show that 
	\[
	\forces_{\omega_2} ``\text{There is no null tower of length $\omega_2$}".
	\]
	Let $G$ be a $(V, P_{\omega_2})$-generic filter.
	In $V[G]$, consider an increasing sequence  $\seq{A_\alpha : \alpha < \omega_2}$ of measure $0$ sets.
	By Lemma \ref{lem:reflection}, we can find a stationary set $S \subset \omega_2$ such that for all $\alpha \in S$, $\cf(\alpha) = \omega_1$ and
	\[
	(\seq{A_\beta \cap V[G_\alpha] : \beta < \alpha } \in V[G_\alpha] \AND (\forall \beta < \alpha)((A_\beta \cap V[G_\alpha]  \text{ has measure $0$})^{V[G_\alpha]}).
	\]
	Fix $\alpha \in S$.
	Put $B_\alpha := \bigcup_{\beta < \alpha} A_\beta \cap V[G_\alpha]$.
	Then we have $\bigcup_{\alpha < \omega_2} B_\alpha = \bigcup_{\alpha < \omega_2} A_\alpha$.
	We now prove that $B_\alpha$ is also a measure $0$ set in $V[G_\alpha]$.
	Let $\alpha'$ be the successor of $\alpha$ in $S$.
	Then $B_\alpha$ is a measure $0$ set in $V[G_{\alpha'}]$.
	Since the quotient forcing $P_{\alpha'} / G_\alpha$ is a countable suppport iteration of the Laver forcing, this forcing preserves positive outer measure.
	So $B_\alpha$ is also a measure $0$ set in $V[G_{\alpha}]$.
	
	For each $\alpha \in S$, take a Borel code $c_\alpha \in \omega^\omega$ of a measure $0$ set such that $B_\alpha \subset \hat{c}_\alpha$ in $V[G_\alpha]$.
	Since $\cf(\alpha) = \omega_1$, each $c_\alpha$ appears a prior stage.
	Then by Fodor's lemma, we can take a stationary set $S' \subset \omega_2$ that is contained by $S$ and $\beta < \omega_2$ such that $(\forall \alpha \in S')(c_\alpha \in V[G_\beta])$.
	But the number of reals in $V[G_\beta]$ is $\aleph_1$, so we can take $T \subset S'$ unbounded in $\omega_2$ and $c$ such that $(\forall \alpha \in T)(c_\alpha = c)$.
	Then we have $\bigcup_{\alpha < \omega_2} A_\alpha \subset \hat{c}$ in $V[G]$.
	So $\bigcup_{\alpha < \omega_2} A_\alpha$ has measure $0$.
\end{proof}

\begin{cor}
	$\Con(\ZFC) \rightarrow \Con(\ZFC+\GP(\projective)+\neg \GP(\all))$.
	Here, $\projective = \bigcup_{n \ge 1} \boldsig^1_n$.
\end{cor}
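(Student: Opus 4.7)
The plan is to take the Laver model $V[G]$ produced in Theorem \ref{lavermodelthm}, in which $\GP(\all)$ and $2^{\aleph_0} = \aleph_2$ both hold, and then collapse $\aleph_2$ to $\aleph_1$ using an $\omega_1$-closed forcing. Starting from $\Con(\ZFC)$ we first pass to a model $V$ of $\ZFC + \CH$ (for instance $L$), run the length-$\omega_2$ countable-support Laver iteration of Theorem \ref{lavermodelthm} to obtain $V[G]$, and then force over $V[G]$ with $Q = \Coll(\omega_1, \aleph_2^{V[G]})$, the standard forcing of countable partial functions from $\omega_1$ into $\aleph_2$. Let $H$ be $V[G]$-generic for $Q$ and work in $V[G][H]$.

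The crucial feature of $Q$ is that it is $\omega_1$-closed in $V[G]$, so it adds no new reals, no new countable sequences of ordinals, and in particular no new Borel or null-set codes. Since $Q$ collapses $\aleph_2^{V[G]}$ to $\aleph_1$ while $2^\omega$ is preserved, $V[G][H] \models 2^{\aleph_0} = \aleph_1$, i.e.\ $\CH$ holds there. By Theorem \ref{neggpall}, this immediately gives $\neg\GP(\all)$ in $V[G][H]$.

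To verify $\GP(\projective)$ in $V[G][H]$, suppose $A \subset \omega^\omega \times 2^\omega$ is projective in $V[G][H]$, with increasing sections of $\mu$-measure $0$. Then $A$ is coded by a real parameter $c$ and a projective formula $\varphi$; because $2^\omega \cap V[G][H] = 2^\omega \cap V[G]$, the code $c$ already lies in $V[G]$, and projective formulas are absolute between the two models by a routine induction on projective rank, since the quantifier range $\omega^\omega$ is shared. Thus the set $A$, its sections $A_x$, the inclusion pattern $x\le x' \Rightarrow A_x \subset A_{x'}$, and the property ``$\bigcup_x A_x$ has measure zero'' (witnessed by countable families of rational open balls, hence by reals) all have the same meaning in $V[G]$ as in $V[G][H]$. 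Hence a projective counterexample to $\GP(\projective)$ in $V[G][H]$ would transfer back to a counterexample in $V[G]$, contradicting $V[G] \models \GP(\all)$. The one point requiring care, which I anticipate as the main (minor) obstacle, is to confirm the absoluteness of the projective hierarchy and the null ideal between $V[G]$ and $V[G][H]$; this is standard because $Q$ adds no reals, so no preservation theorems about the null ideal are needed.
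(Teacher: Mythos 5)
Your proposal is correct and follows essentially the same route as the paper: force over the Laver model with a countably closed poset that adds no reals and yields $\CH$, so that $\neg\GP(\all)$ follows from Theorem \ref{neggpall} while $\GP(\projective)$ transfers back to the Laver model by absoluteness of projective formulas between models with the same reals. The only (immaterial) difference is the choice of poset --- you use $\Coll(\omega_1,\aleph_2)$, whereas the paper uses $\operatorname{Fn}(\omega_1,2,\omega_1)$, which forces $\CH$ for the standard reason that adding a Cohen subset of $\omega_1$ implies $\diamondsuit$.
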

\begin{proof}
	Assume $\CH$ and let $P$ be the forcing poset from Theorem \ref{lavermodelthm}, that is the countable support iteration of Laver forcing notions of length $\omega_2$.
	Then we have $P \forces \GP(\all)$.
	In particular we have $P \forces \GP(\projective)$.
	Let $\dot{Q}$ be a $P$-name of the poset
	\[
	\operatorname{Fn}(\omega_1, 2, \omega_1) = \{ p : \text{$p$ is a countable partial function from $\omega_1$ to $2$} \}
	\]
	with the reverse inclusion order.
	It is well-known that provably $\operatorname{Fn}(\omega_1, 2, \omega_1)$ adds no new reals and forces $\CH$.
	So we have $P \ast \dot{Q} \forces \CH$.
	Since $P \forces \GP(\projective)$ and $P \forces ``\dot{Q} \text{ adds no new reals}"$, we have $P \ast \dot{Q} \forces \GP(\projective)$.
\end{proof}

\section{A necessary condition for $\GP(\bolddelta^1_2)$}\label{sec:gpdelta12}

As mentioned in Section \ref{sec:review}, a sufficient condition for $\GP(\boldsig^1_2)$ is every $\boldsig^1_2$ set is Lebesgue measurable, or equivalently for every real $a$, there is an amoeba real over $L[a]$. (This equivalence was proved by Solovay, see \cite[Theorem 9.3.1]{bartoszynski1995set}).
In this section we give a necessary condition for $\GP(\bolddelta^1_2)$.

\begin{fact}[{Spector--Gandy, see \cite[Propositon 4.4.3]{chong2015recursion}}]\label{spectorgandy}
	Let $A$ be a set of reals.
	Then $A$ is a $\Sigma^1_2$ set iff there is a $\Sigma_1$ formula $\varphi$ such that
	\[
	x \in A \iff (L_{\omega_1^{L[x]}}[x], \in) \models \varphi(x).
	\]
\end{fact}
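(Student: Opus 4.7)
The plan is to establish the biconditional in two directions. For the easy direction $(\Leftarrow)$ I would use $\Sigma_1$-reflection in the $L[x]$-hierarchy together with the classical fact that ``$y \in L[x]$'' is a $\Sigma^1_2$ predicate of $x$ and $y$. For the harder direction $(\Rightarrow)$ I would invoke Shoenfield absoluteness and the tree representation of $\Pi^1_1$ predicates.

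For $(\Leftarrow)$: since $\varphi$ is $\Sigma_1$ and the $L[x]$-hierarchy is cumulative,
\[
(L_{\omega_1^{L[x]}}[x], \in) \models \varphi(x)
\]
is equivalent to $(\exists \alpha < \omega_1^{L[x]})(L_\alpha[x] \models \varphi(x))$. I would then re-express the right-hand side as: there exist reals $y, z$ such that $y$ codes a countable well-ordering of some type $\alpha$, $z$ codes the corresponding structure $L_\alpha[x]$, the coded structure satisfies $\varphi(x)$, and $y \in L[x]$ (the latter being equivalent to $\alpha < \omega_1^{L[x]}$). The syntactic conditions on $y$ and $z$ are arithmetical in $x, y, z$, while ``$y \in L[x]$'' is the standard $\Sigma^1_2$ predicate, so the whole formula defining $A$ is $\Sigma^1_2$.

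For $(\Rightarrow)$: write $x \in A$ as $(\exists y \in \omega^\omega)(\forall z \in \omega^\omega)\, R(x, y, z)$ with $R$ arithmetical. The inner $\Pi^1_1$ predicate $(\forall z)\, R(x, y, z)$ is equivalent, via the standard tree representation, to well-foundedness of a canonical tree $T(x, y) \subset \omega^{<\omega}$ that is $\Delta_1$-definable from $x, y$, and well-foundedness is witnessed by a ranking function $f : T(x, y) \to \Ordinals$ belonging to $L[x, y]$. By Shoenfield absoluteness, $x \in A$ is absolute between $V$ and $L[x]$, so I may restrict the outer witness $y$ to $L[x] \cap \omega^\omega$; then $L[x, y] = L[x]$. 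A condensation argument inside $L[x]$ shows that every real of $L[x]$, and hence every ranking function of $T(x, y)$ living in $L[x]$, appears in $L_\alpha[x]$ for some $\alpha < \omega_1^{L[x]}$. Thus the assertion ``there exist $y$ and $f$ such that $f$ ranks $T(x, y)$'' becomes a $\Sigma_1$ statement over $L_{\omega_1^{L[x]}}[x]$.

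The main obstacle will be the bookkeeping in $(\Rightarrow)$: one needs to verify that the tree $T(x, y)$ is $\Delta_1$-definable uniformly in $x, y$ inside $L[x]$, and that the canonical ranking function of a well-founded tree in $L[x]$ is constructed at a stage countable in $L[x]$. Both facts are standard but require some care in the formalization; everything else reduces to $\Sigma_1$-reflection and the known complexity of the predicate ``$y \in L[x]$''.
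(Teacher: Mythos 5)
The paper does not prove this statement at all: it is recorded as a black-box Fact with a citation to Chong--Yu (Proposition 4.4.3), so there is no in-paper argument to compare against. Your outline is essentially the standard proof of the relativized Spector--Gandy/Shoenfield analysis and I see no real gap in it: the $(\Leftarrow)$ direction via coding a countable level $L_\alpha[x]$ by a real together with the $\Sigma^1_2$ predicate ``$y \in L[x]$'', and the $(\Rightarrow)$ direction via the tree representation of the $\Pi^1_1$ matrix, Shoenfield absoluteness to pull the witness $y$ into $L[x]$, and condensation to locate $y$ and the ranking function below $\omega_1^{L[x]}$, is exactly how the cited reference proceeds. One small inaccuracy: in $(\Leftarrow)$ the requirements that $y$ code a \emph{well}-ordering and that $z$ code a \emph{well-founded} copy of $L_\alpha[x]$ are $\Pi^1_1$, not arithmetical as you assert; this is harmless, since a conjunction of $\Pi^1_1$ and $\Sigma^1_2$ conditions prefixed by a real existential quantifier is still $\Sigma^1_2$, but the bookkeeping should be stated correctly. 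Likewise, in $(\Rightarrow)$ the reason the ranking function appears below $\omega_1^{L[x]}$ is worth making explicit: the tree $T(x,y)$ is recursive in $(x,y)$ with $y \in L[x]$, so its rank is below $\omega_1^{L[x]}$ and the rank function is a countable object captured by condensation.
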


The following is well-known.

\begin{lem}\label{amoebaimpdominating}
	Let $M$ be a model of $\ZFC$ contained by $V$.
	And assume that the set $\{ y \in 2^\omega : y \text{ is a random real over } M  \}$ has measure $1$.
	Then there is a dominating real over $M$.
\end{lem}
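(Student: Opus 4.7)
The plan is to pass from the measure-theoretic hypothesis to a dominating real in two stages: extract a master null set covering all $M$-null sets, then convert that null set into combinatorial data in $V$. To begin, observe that $y \in 2^\omega$ fails to be random over $M$ precisely when $y$ belongs to some Borel null set coded in $M$. Hence the set of non-randoms over $M$ equals $\bigcup \{B : B \text{ is a Borel null set coded in } M\}$, and the hypothesis says this union has outer measure $0$. By outer regularity, there is then a single null $G_\delta$ set $N \in V$, say $N = \bigcap_n U_n$ with $U_n$ open and $\mu(U_n) < 2^{-n}$, that contains every Borel null set coded in $M$.

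Next, I would apply a classical Bartoszyński-style extraction to convert $N$ into a slalom $\sigma : \omega \to [\omega]^{<\omega}$ in $V$, of width $|\sigma(n)| \le n$, such that every $f \in M \cap \omega^\omega$ is caught by $\sigma$, meaning $f(n) \in \sigma(n)$ for almost all $n$. Concretely, one partitions $\omega$ into fast-growing blocks and reads from the open approximations $U_n$ the indices of basic cylinders of bounded complexity; the resulting $\sigma \in V$ has the catching property because for any $f \in M$ the natural null Borel set $A_f \in M$ encoding $f$ on those blocks is contained in $N$, and the measure constraint $\mu(U_n) < 2^{-n}$ forces the encoded value $f(n)$ to lie in $\sigma(n)$ eventually. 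The dominating real is then $g(n) := \max \sigma(n) + 1$: for every $f \in M \cap \omega^\omega$ we have $f(n) \le \max \sigma(n) < g(n)$ for almost all $n$, so $f \le^* g$.

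The main obstacle is the slalom extraction itself: one has to design the interval partition and encoding so that containment $A_f \subseteq N$ really does localize $f(n)$ inside a finite set $\sigma(n)$ determined in $V$ independently of $f$. This is the heart of the classical equivalence between the measure-theoretic and slalom forms of $\add(\nul)$, which I would quote from \cite{bartoszynski1995set} rather than reprove in detail.
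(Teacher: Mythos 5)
Your proposal is correct and follows essentially the same route as the paper: both first use the measure-one hypothesis (plus outer regularity) to find a single Borel null set in $V$ covering every Borel null set coded in $M$, and then invoke the Bartoszy\'nski machinery behind $\add(\nul) \le \frakb$ to turn that covering into a real $\le^*$-dominating $M \cap \omega^\omega$. The only difference is presentational: the paper packages this second step as an absolute Galois--Tukey morphism $(\varphi,\psi)$ quoted as a black box, while you unpack it into the explicit slalom formulation, likewise citing \cite{bartoszynski1995set} for the extraction.
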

\begin{proof}
	Let $\nBC$ denote the set of all Borel codes for measure $0$ Borel sets.
	There is an absolute Tukey morphism $(\varphi, \psi)$ that witnesses $\add(\nul) \le \frakb$.
	That is, $(\varphi, \psi)$ satisfies $\varphi \colon \omega^\omega \to \nBC$, $\psi \colon \nBC \to \omega^\omega$, and $(\forall x \in \omega^\omega)(\forall y \in \nBC) (\hat{\varphi(x)} \subset \hat{y} \rightarrow x \le^* \psi(y))$.
	By absoluteness, if $x \in \omega^\omega \cap M$, then we have $\varphi(x) \in M$.
	Now
	\[
	\bigcup_{x \in \omega^\omega \cap M} \hat{\varphi(x)}
	\]
	has measure 0 since this is contained in $\{ y \in 2^\omega : y \text{ is not a random real over } M  \}$ by the definition of randomness.
	Take a $z \in \nBC$ such that
	\[
	\bigcup_{x \in \omega^\omega \cap M} \hat{\varphi(x)} \subset \hat{z}.
	\]
	Now put $w = \psi(z)$.
	Then using the fact that $(\varphi, \psi)$ is Tukey morphism, we have $w$ is a dominating real over $M$.
\end{proof}

\begin{thm}
	For every real $a$, $\GP(\Delta^1_2(a))$ implies there is a dominating real over $L[a]$.
	Thus, $\GP(\bolddelta^1_2)$ implies for every real $a$, there is a dominating real over $L[a]$.
	In particular $V=L$ implies $\neg \GP(\Delta^1_2)$.
\end{thm}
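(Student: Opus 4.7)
The plan is to prove the main implication in contrapositive form for each fixed $a$; the other two assertions follow immediately, the $\bolddelta^1_2$ version by relativising to every real $a$, and the $V=L$ consequence from the observation that when $V=L$ no real can $\le^*$-dominate all of $\omega^\omega\cap L = \omega^\omega$ (e.g., $f+1 \not\le^* f$). So fix $a$ and assume there is no dominating real over $L[a]$; I will build a $\Delta^1_2(a)$-definable family refuting $\GP(\Delta^1_2(a))$, mirroring Theorem~\ref{neggpall}(1) with $L[a]$ in place of the ground model.

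By the contrapositive of Lemma~\ref{amoebaimpdominating}, the set
\[N = \bigcup\{\hat c : c\in L[a],\ c \text{ is a Borel code for a null set}\}\]
of reals that are not random over $L[a]$ has positive outer measure in $V$. Working inside $L[a]$ and using $\CH^{L[a]}$, I would fix two $\Sigma_1^{L[a]}$-definable enumerations of length $\omega_1^{L[a]}$: a $\le^*$-increasing, $\le^*$-cofinal-in-$L[a]$ sequence $\seq{s_\alpha : \alpha < \omega_1^{L[a]}}$ in $\omega^\omega\cap L[a]$, and an enumeration $\seq{c_\alpha : \alpha < \omega_1^{L[a]}}$ of every Borel code in $L[a]$ of a null set. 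For $x\in\omega^\omega$ set
\[\alpha(x) = \min\{\alpha < \omega_1^{L[a]} : s_\alpha \not\le^* x\}, \qquad B_x = \bigcup_{\beta < \alpha(x)} \hat c_\beta.\]
Since $L[a]\cap\omega^\omega$ is unbounded in $V$ under our assumption, $\alpha(x)$ is always defined.

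The three conditions of $\GP$ are verified just as in Theorem~\ref{neggpall}(1): (a) $x \le x'$ implies $\alpha(x) \le \alpha(x')$ and hence $B_x \subset B_{x'}$; (b) $\alpha(x) < \omega_1^{L[a]} \le \omega_1^V$ is countable in $V$, so $B_x$ is a countable union of Borel null sets and hence null; (c) taking $x = s_\alpha$ gives $\alpha(s_\alpha) > \alpha$ by the $\le^*$-monotonicity of the $s$-sequence, so $\hat c_\alpha \subset B_{s_\alpha}$, whence $\bigcup_{x\in\omega^\omega} B_x = N$ is not null.

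The technically delicate step, and the main obstacle, is showing that $B = \{(x,y) : y \in B_x\}$ is $\Delta^1_2(a)$. The crucial observation is that both ``$(x,y)\in B$'' and ``$(x,y)\notin B$'' are witnessed by the same ordinal $\alpha(x)$, so each can be written as
\[\exists\,\alpha\,\bigl[\,\alpha < \omega_1^{L[a]} \wedge (\forall\beta<\alpha)(s_\beta\le^* x) \wedge s_\alpha\not\le^* x \wedge \Phi(\alpha,y)\,\bigr],\]
where $\Phi(\alpha,y)$ is $y \in \textstyle\bigcup_{\beta<\alpha}\hat c_\beta$ in one case and its negation in the other. Since $\alpha \mapsto (s_\alpha, c_\alpha)$ is $\Sigma_1^{L[a]}$-definable and $\omega_1^{L[a]} \le \omega_1^{L[a,x,y]}$, after coding the initial segment $\seq{s_\beta,c_\beta : \beta \le \alpha}$ as a single countable object, each matrix becomes $\Sigma_1$ over $L_{\omega_1^{L[a,x,y]}}[a,x,y]$ (the bounded universal quantifier in the negation case poses no issue). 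Fact~\ref{spectorgandy} then yields $\Sigma^1_2(a)$-complexity for both forms, and hence $\Delta^1_2(a)$-complexity for $B$, completing the contradiction with $\GP(\Delta^1_2(a))$.
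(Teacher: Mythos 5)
Your proposal is correct and follows essentially the same route as the paper's own proof: the same $\Sigma_1^{L[a]}$-definable cofinal increasing sequence and enumeration of null Borel codes, the same $\alpha(x)$ and sections $B_x=\bigcup_{\beta<\alpha(x)}\hat c_\beta$, the same appeal to Lemma~\ref{amoebaimpdominating} for non-nullity of the union, and the same Spector--Gandy argument exploiting that membership and non-membership are both witnessed by the unique ordinal $\alpha(x)$, hence both $\Sigma^1_2(a)$. The only difference is presentational (you front-load the contrapositive of Lemma~\ref{amoebaimpdominating} where the paper derives the contradiction at the end).
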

\begin{proof}
	Fix a real $a$.
	Assume that $L[a] \cap \omega^\omega$ is unbounded.
	Note that, in this situation, we have $\omega_1^{L[a]} = \omega_1$.
	Let $\seq{x_\alpha : \alpha < \omega_1}$ be a cofinal increasing sequence in  $\omega^\omega \cap L[a]$. We can take this sequence with a $\Delta_1(a)$ definition by using a $\Delta_1(a)$ canonical wellordering of $L[a] \cap \omega^\omega$.
	Note that this sequence is unbounded in $V \cap \omega^\omega$ by assumption.
	
	Take a sequence $\seq{c_\alpha : \alpha < \omega_1}$ consisting of all Borel codes for measure $0$ Borel sets in $L[a]$.
	As above, we can take this sequence with a $\Delta_1(a)$ definition.
	
	For each $x \in \omega^\omega$, put
	\[
	\alpha(x) = \min \{ \alpha : x_\alpha \not \le^* x \}.
	\]
	This is well-defined since $\seq{x_\alpha : \alpha < \omega_1}$ is unbounded in $V \cap \omega^\omega$.
	Also put
	\[
	A_x = \bigcup_{\beta < \alpha(x)} \hat{c_\beta}.
	\]
	Then the set $A$ is $\Delta^1_2(a)$, by Spector--Gandy theorem and the following equations:
	\begin{align*}
		A &= \{ (x, y) \in \omega^\omega \times 2^\omega : (\exists \beta < \alpha(x))\ y \in \hat{c_\beta} \} \\
		&= \{ (x, y) \in \omega^\omega \times 2^\omega : (\exists \alpha)(x_\alpha \not \le^* x \AND (\forall \beta < \alpha)(x_\beta \le^* x) \AND (\exists \beta < \alpha)(y \in \hat{c_\beta})) \} \\
		&= \{ (x, y) \in \omega^\omega \times 2^\omega : (\forall \alpha)((x_\alpha \not \le^* x \AND (\forall \beta < \alpha)(x_\beta \le^* x)) \rightarrow (\exists \beta < \alpha)(y \in \hat{c_\beta})) \}.
	\end{align*}
	
	Note that each $A_x$ ($x \in \omega^\omega$) is a measure $0$ set since it is a countable union of measure $0$ sets.
	And we can easily observe that $x \le^* x'$ implies $A_x \subset A_{x'}$.
	
	Since $\alpha \le \alpha(x_\alpha)$, we have $\bigcup_{x \in \omega^\omega} A_x = \bigcup_{\alpha < \omega_1} \hat{c_\alpha}$.
	Thus it is sufficient to show that $C := \bigcup_{\alpha < \omega_1} \hat{c_\alpha}$ is not a measure $0$ set.
	In order to show this, assume that $C$ is a measure $0$ set.
	Note that if a real $y \in 2^\omega$ does not belong to $C$, then $y$ is a random real over $L[a]$ since the sequence $\seq{c_\alpha : \alpha < \omega_1}$ enumerates all measure $0$ Borel codes in $L[a]$.
	So since we assumed $C$ is measure $0$, the set $\{ y \in 2^\omega : y \text{ is a random real over } L[a]  \}$ has measure $1$.
	Thus by Lemma \ref{amoebaimpdominating}, there is a dominating real over $L[a]$.
	This contradicts the assumption.
	
	So we constructed a set $A$ that violates $\GP(\bolddelta^1_2)$. This finishes the proof.
\end{proof}

Therefore, we obtain the following diagram of implications.

\[
\begin{tikzpicture}
	\node (sigma12lm) at (0, 0) {$\boldsig^1_2$-LM};
	\node (sigma12bp) at (3, 0) {$\boldsig^1_2$-BP};
	\node[align=left] (dominating) at (8, 0) {$(\forall a \in \R)(\exists z \in \omega^\omega)$ \\ \hspace{0.5cm}$(\text{$z$ is a dominating real over $L[a]$})$};
	\node (gpsigma12) at (1.5, -2) {$\GP(\boldsig^1_2$)};
	\node (gpdelta12) at (4.5, -2) {$\GP(\bolddelta^1_2$)};
	
	\draw[->] (sigma12lm) -> (sigma12bp);
	\draw[->] (sigma12bp) -> (dominating);
	\draw[->] (sigma12lm) -> (gpsigma12);
	\draw[->] (gpsigma12) -> (gpdelta12);
	\draw[->] (gpdelta12) -> (dominating);
\end{tikzpicture}
\]

Here, $\boldsig^1_2$-LM means $\boldsig^1_2$-Lebesgue measurability and $\boldsig^1_2$-BP means $\boldsig^1_2$-Baire property.
$\boldsig^1_2$-LM and $\GP(\boldsig^1_2)$ can be separated since the Laver model over $L$ satisfies $\GP(\all)$ but not $\boldsig^1_2$-LM.

\section{Consequences of determinacy}\label{sec:determinacy}

In this section, we don't assume the axiom of choice and we will discuss a consequence of determinacy for Goldstern's principle.

\begin{thm}\label{thm:determinacyimpliesgp}
	Let $\Gamma$ be a pointclass that contains all Borel subsets and is closed under Borel substitution.
	Assume $\Det(\Gamma)$.
	Then $\GP(\proj(\Gamma))$ holds, where $\proj(\Gamma)$ is the pointclass of all projections along $\omega^\omega$ of a set in $\Gamma$.
	
	In particular, $\AD$ implies $\GP(\all)$. Also $\Det(\boldpi^1_n)$ implies $\GP(\boldsig^1_{n+1})$ for every $n \ge 1$.
\end{thm}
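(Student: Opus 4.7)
The approach is to adapt Goldstern's argument from Theorem~\ref{goldsterntheorem}, replacing the random-forcing-plus-Shoenfield-absoluteness step with a direct game-theoretic argument available under $\Det(\Gamma)$. First I write $A = \{(x,y) : (\exists z)\, R(x,z,y)\}$ for some $R \in \Gamma$, and without loss of generality take $Y = 2^\omega$. Then $B := \bigcup_{x \in \omega^\omega} A_x = \{y : (\exists (x,z))\, R(x,z,y)\}$ again lies in $\proj(\Gamma)$, using closure of $\Gamma$ under Borel substitution to pair $(x,z)$ into a single element of $\omega^\omega$. A preparatory fact I use is that $\Det(\Gamma)$ implies every $\proj(\Gamma)$-set is Lebesgue measurable (the standard generalization of the Mycielski--Swierczkowski theorem, proved by playing a game with payoff in $\Gamma$). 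In particular $B$ is measurable; suppose for contradiction that $\mu(B) > 0$, and choose a closed $K \subseteq B$ with $\mu(K) > 0$.

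The main step is to design a game whose payoff lives in $\Gamma$ and whose determinacy yields a contradiction. In this game, player~I plays to produce $(x,z) \in \omega^\omega \times \omega^\omega$, and player~II plays bits which, via a Mycielski--Swierczkowski-style protocol, build a real $y \in 2^\omega$ distributed according to Lebesgue measure on $2^\omega$ when II's bits are chosen independently and uniformly at random. The winning condition is formulated in terms of $y \in K$ and the $\Gamma$-relation $R(x,z,y)$, so by closure of $\Gamma$ the payoff lies in $\Gamma$ and the game is determined. If player~I has a winning strategy, the strategy provides a continuous reduction of II's random plays (almost surely in $K$) to witnesses in $A$; combined with the $\le$-monotonicity, this forces $K$ to be essentially covered by a single null section $A_{x^*}$, contradicting $\mu(K) > 0$. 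If player~II has a winning strategy $\tau$, then $\tau(x,z) \in K$ and $\neg R(x,z,\tau(x,z))$ for every $(x,z)$; using the $\le$-monotonicity (via the passage to $\le^*$ from the lemma in Section~\ref{sec:review} if needed), one amalgamates these witnesses into a single $y^* \in K$ lying in no $A_x$, contradicting $K \subseteq B$.

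The main obstacle will be the precise formulation of the game so that (i)~the payoff is genuinely in $\Gamma$ under the closure hypotheses, and (ii)~each of the two cases yields a real contradiction rather than mere per-$(x,z)$ information. In particular, the monotonicity hypothesis on $\{A_x\}$ is essential for converting the ``one $(x,z)$ at a time'' conclusion of a strategy into the uniform ``for all $x$'' statement needed to reach a contradiction.
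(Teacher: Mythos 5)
There is a genuine gap, and it lies exactly where you flag ``the main obstacle'': the direction of your game makes both branches of the determinacy dichotomy fail. In your setup Player~I produces the pair $(x,z)$ and Player~II produces the point $y$. If II has a winning strategy $\tau$, you obtain, for each play $(x,z)$ of I, some $y=\tau*(x,z)$ with $y\in K$ and $\neg R(x,z,y)$ --- a \emph{different} $y$ for each $(x,z)$. To contradict $K\subset B$ you need a \emph{single} $y^*\in K$ with $\neg R(x,z,y^*)$ for \emph{all} $(x,z)$, and no amalgamation is available: the monotonicity of $x\mapsto A_x$ is useless against the witness coordinate $z$, and even in $x$ it only gives directedness, not a single dominating $x$ over continuum many plays. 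The Player~I branch is equally unsubstantiated: a winning strategy for I does not ``cover $K$ by a single null section $A_{x^*}$''; there is no reason a strategy's outputs should concentrate on one section. So as written neither case produces a contradiction.

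The paper's proof (Harrington's covering game) reverses the roles precisely to fix this quantifier problem: Player~I builds the triple $(z,x,y)$ bit by bit (so I supplies both the point $y$ and the witnesses $x,z$ for $y\in B$), while Player~II plays finite unions $G_n$ of basic open sets with $\mu(G_n)\le\varepsilon/16^{n+1}$, winning if $y\in\bigcup_n G_n$ whenever $(z,x,y)\in B$. A winning strategy for II then yields a single open set $E$ (the union of all $G_n$ over all of I's plays) with $\mu(E)\le\sum_n 8^{n+1}\varepsilon/16^{n+1}=\varepsilon$ covering $\bigcup_x A_x$ --- covering is monotone under unions, so the per-play information aggregates, unlike your ``single point avoiding everything''. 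And when I has a winning strategy $\sigma$, the set $C$ of plays consistent with $\sigma$ is $\boldsig^1_1$, its $x$-sections are null (being contained in the $A_x$), and after the monotonization $D_x=\bigcup_{x'\le x}(\proj(C))_{x'}$ one invokes Goldstern's own theorem $\GP(\boldsig^1_1)$ to conclude $\proj(\proj(C))$ is null, whence II can defeat $\sigma$ by covering it --- this appeal to $\GP(\boldsig^1_1)$ is the step your sketch is missing. Your preliminary reduction (unfolding $A=\proj(R)$ with $R\in\Gamma$ so that the payoff lies in $\Gamma$) and the appeal to measurability of $\proj(\Gamma)$ sets are fine, but the core of the argument needs to be restructured along these lines.
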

\begin{proof}
	This proof is based on Harrington's covering game. See also \cite[Exercise 6A.17]{moschovakis2009descriptive}.
	In this proof, we use the following notation: for $j < n < \omega$,
	\[\proj^n_j : (\omega^\omega)^n \to (\omega^\omega)^{n-1}; (x_0, \dots, x_{n-1}) \mapsto (x_0, \dots, x_{j-1}, x_{j+1}, \dots, x_{n-1}).\]
	Fix $B \subset \omega^\omega \times \omega^\omega \times 2^\omega$ and $A = \proj^3_0(B)$ such that each section $A_x$ has measure $0$, $(\forall x, x' \in \omega^\omega)(x \le x' \Rightarrow A_{x} \subset A_{x'})$.
	Also let $\varepsilon > 0$.
	We have to show that the outer measure $\mu^*(\proj(A))$ is less than or equal to $\varepsilon$.
	
	Fix a Borel isomorphism $\pi \colon 2^\omega \to \omega^\omega$.
	Consider the following game:
	At stage $n$, player I plays $(s_n, t_n, u_n) \in \{0, 1\}^3$. Player II then plays a finite union $G_n$ of basic open sets such that $\mu(G_n) \le \varepsilon / 16^{n+1}$.
	In this game, we define that player I wins if and only if $(z, x, y) \in B$ and $y \not \in \bigcup_{n \in \omega} G_n$, where $x = \pi(s_0, s_1, \dots), y = (t_0, t_1, \dots)$ and $z = \pi(u_0, u_1, \dots)$.
	
	\[
	\begin{tikzpicture}
		\node (playerI) at (0, 0) {Player I};
		\node (playerII) at (0, -1) {Player II};
		\node (I0) at (2, 0) {$(s_0, t_0, u_0)$};
		\node (II0) at (3, -1) {$G_0$};
		\node (I1) at (4, 0) {$(s_1, t_1, u_1)$};
		\node (II1) at (5, -1) {$G_1$};
		\node (I2) at (6, 0) {$(s_2, t_2, u_2)$};
		\node (II2) at (7, -1) {$G_2$};
		\node (dots) at (8, -0.5) {$\dots$};
		\draw[->] (I0) -> (II0);
		\draw[->] (II0) -> (I1);
		\draw[->] (I1) -> (II1);
		\draw[->] (II1) -> (I2);
		\draw[->] (I2) -> (II2);
	\end{tikzpicture}
	\]
	
	Assume that player I has a winning strategy $\sigma$.
	Put
	\[
	C = \{ (z, x, y) \in \omega^\omega \times \omega^\omega \times 2^\omega : (\exists (G_0, G_1, \dots))((z, x, y) \text{ is the play of I along } \sigma \text{ against } (G_0, G_1, \dots)) \}.
	\]
	Then clearly $C$ is a $\boldsig^1_1$ set.
	Since player I wins, we have $C \subset B$.
	So we have $\proj^3_0(C) \subset \proj^3_0(B) = A$.
	So each $(\proj^3_0(C))_x \subset A_x$ has measure $0$.
	For $x \in \omega^\omega$, put $D_x = \bigcup_{x' \le x} (\proj^3_0(C))_{x'}$, which is a $\boldsig^1_1$ set.
	Since $(\proj^3_0(C))_x \subset A_x$, each $D_x$ has measure $0$.
	And we have $x' \le x$ implies $D_{x'} \subset D_x$.
	
	Thus, by $\GP(\boldsig^1_1)$, $\proj^2_0(D)$ has measure $0$. So $\proj^2_0(\proj^3_0(C))$ has also measure $0$.
	Therefore we can take $(G_0, G_1, \dots)$ such that $\proj^2_0(\proj^3_0(C)) \subset \bigcup_{n \in \omega} G_n$ and $\mu(G_n) \le \varepsilon / 16^{n+1}$.
	
	Let $(z, x, y)$ be the play along $\sigma$ against $(G_0, G_1, \dots)$, then $(z, x, y) \in C$ and $y \not \in \bigcup_{n \in \omega} G_n$. This contradicts to $\proj^2_0(\proj^3_0(C)) \subset \bigcup_{n \in \omega} G_n$.
	
	So player I doesn't have a winning strategy. Therefore, by $\Det(\Gamma)$, player II has a winning strategy $\tau$.
	Put
	\[
	E = \bigcup \{ G_n : (G_0, \dots, G_n) \text{ is the play along } \tau \text{ against some } (s_0, t_0, u_0 \dots, s_n, t_n, u_n) \}.
	\]
	Then we have $\proj^2_0(\proj^3_0(B)) \subset E$.
	In order to check this, let $(z, x, y) \in B$.
	Consider the player I's play $(z, x, y)$.
	Let $(G_0, G_1, \dots)$ be the play along $\tau$ against $(z, x, y)$. 
	Since II wins, $y \in \bigcup_{n \in \omega} G_n \subset E$.
	
	Also we have
	\begin{align*}
		\mu(E) \le \sum_n 8^{n+1} \frac{\varepsilon}{16^{n+1}} = \varepsilon.
	\end{align*}
	Therefore we have $\mu^*(\proj^2_0(A)) \le \mu(E) \le \varepsilon$.
\end{proof}

\section{Consequences of large cardinals}\label{sec:largecardinals}

In this section, using large cardinals, we separate $\GP(\boldsig^1_{n+1})$ and $\GP(\boldsig^1_n)$ for every $n \ge 2$.

For a pointclass $\Gamma$, recall that $\triangleleft$ is a $\Gamma$-good wellordering of the reals if it is a wellordering of the reals of order-type $\omega_1$, it is in $\Gamma$ and the relation \[\{(x,y) : \text{$x$ codes the initial segment below $y$ with respect to $\triangleleft$} \}\] is in $\Gamma$.

\begin{fact}[{{\cite{bagaria1997sets}}} and {{\cite{STEEL199577}}}] \label{fact:measandwoodin}
	\begin{enumerate}
		\item If $\ZFC$ is consistent, then so is $\ZFC$ plus $\boldsig^1_2$ Lebesgue measurability plus ``there is a $\boldsig^1_3$ good wellordering of the reals of length $\omega_1$".
		\item Assume that there are $n$ many Woodin cardinals. Then there is an inner model $M_n$ of $\ZFC$ that models $\Det(\boldsig^1_n) $ and ``$\text{there is a $\boldsig^1_{n+2}$ good wellordering of the reals}$''.
	\end{enumerate}
\end{fact}

By an easy calculation, we can get the following lemma.

\begin{lem}\label{lem:goodwellorderingimpliesscale}
	Let $n \ge 2$.
	If there is a $\boldsig^1_n$ good wellordering $\trianglelefteq$ of the reals of length $\omega_1$, then there is a cofinal increasing sequence of $\omega^\omega$ whose image is $\bolddelta^1_n$. \qed
\end{lem}

Using the above lemma, we can get:

\begin{lem}\label{lem:goodwellorderingimpliesneggp}
	Let $n \ge 2$.
	If there is a $\boldsig^1_n$ good wellordering $\trianglelefteq$ of the reals of length $\omega_1$, then $\neg \GP(\bolddelta^1_n)$ holds.
\end{lem}
\begin{proof}
	Let $D$ denote the set defined in Lemma \ref{lem:goodwellorderingimpliesscale}.
	We define a set $A$ by
	\[
	A = \{ (x, y) \in \omega^\omega \times \omega^\omega : y \trianglelefteq z \text{ for the minimum $z \in D$ that dominates $x$} \}.
	\]
	Then we have
	\begin{align*}
		(x, y) \in A \iff & (\exists z)(\exists w) \\
		&\ [w \text{ codes the initial segment below } z \AND z \in D \ \AND \\
		&\ \ x \le^* z \ \AND (\forall k)((w)_k \in D \rightarrow x \not \le^* (w)_k) \ \AND \\
		&\ \ y \trianglelefteq z]
	\end{align*}
	So $A$ is $\boldsig^1_n$. Moreover, since we have
	\begin{align*}
		(x, y) \in A \iff & (\forall z)(\forall w) \\
		&\ [[w \text{ codes the initial segment below } z \AND z \in D \ \AND \\
		&\ \ x \le^* z \ \AND (\forall k)((w)_k \in D \rightarrow x \not \le^* (w)_k)] \ \rightarrow \\
		&\ \ y \trianglelefteq z],
	\end{align*}
	it is also true that $A$ is $\boldpi^1_n$. So $A$ is $\bolddelta^1_n$.
	
	Since each $A_x$ is countable and $\bigcup_{x \in \omega^\omega} A_x = \omega^\omega$, this $A$ witnesses $\neg \GP(\bolddelta^1_n)$.
\end{proof}

\begin{cor}
	\begin{enumerate}
		\item If $\ZFC$ is consistent, then so is $\ZFC+\GP(\boldsig^1_2)+\neg \GP(\bolddelta^1_3)$.
		\item For every $n \ge 1$, if $\ZFC+(\text{there are $n$ many Woodin cardinals})$ is consistent, then so is $\ZFC+\GP(\boldsig^1_{n+1})+\neg \GP(\bolddelta^1_{n+2})$.
	\end{enumerate}
\end{cor}
\begin{proof}
	As for (1), combine Fact \ref{fact:measandwoodin} (1), Theorem \ref{thm:measurabilityandgp} and Lemma \ref{lem:goodwellorderingimpliesneggp}.
	To show (2), combine Fact \ref{fact:measandwoodin} (2), Theorem  \ref{thm:determinacyimpliesgp} and Lemma \ref{lem:goodwellorderingimpliesneggp}.
\end{proof}

\section{$\GP(\all)$ in Solovay models}\label{sec:solovaymodel}

Now that we know that $\AD$ implies $\GP(\all)$, it is natural to ask whether $\GP(\all)$ holds in Solovay models.
In this section, we will solve this question affirmatively.

Basic information about Solovay models can be found in \cite[Chapter 3]{kanamori2008higher}.

Let us recall that $\Coll_\kappa$ denotes the Levy collapse.

\begin{defi}
	\begin{enumerate}
		\item $L(\R)^M$ is a Solovay model over $V$ (in the usual sense) if $M = V[G]$ for some inaccessible cardinal $\kappa$ and $(V, \Coll_\kappa)$ generic filter $G$.
		\item $L(\R)^M$ is a Solovay model over $V$ in the weak sense if the following 2 conditions hold in $M$:
		\begin{enumerate}
			\item For every $x \in \R$, $\omega_1$ is an inaccessible cardinal in $V[x]$.
			\item For every $x \in \R$, $V[x]$ is a generic extension of $V$ by some poset in $V$, which is countable in $M$.
		\end{enumerate}
	\end{enumerate}
\end{defi}

\begin{fact}[{{Woodin, see \cite[Lemma 1.2]{bagaria2004}}}]\label{fact:solovaymodelsweakandusual}
	If $L(\R)^M$ is a Solovay model over $V$ in the weak sense then there is a forcing poset $\mathbb{W}$ in $M$ such that $\mathbb{W}$ adds no new reals and
	\[
	\mathbb{W} \forces ``L(\R)^M \text{ is a Solovay model over } V \text{ (in the usual sense)}".
	\]
\end{fact}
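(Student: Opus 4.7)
Set $\kappa = \omega_1^M$; by condition~(a), $\kappa$ is inaccessible in $V$. The plan is to build, via a forcing $\mathbb{W} \in M$ that adds no new reals, a $V$-generic filter $G \subset \Coll_\kappa^V$ with $\R^{V[G]} = \R^M$. Since $\mathbb{W}$ adds no reals, $L(\R)^{V[G]} = L(\R)^M$ inside the $\mathbb{W}$-extension, which is exactly the assertion that $L(\R)^M$ is a Solovay model over $V$ in the usual sense.

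The key technical ingredient is the standard absorption property of the Levy collapse of an inaccessible: if $\mathbb{P} \in V$ satisfies $|\mathbb{P}|^V < \kappa$ and $g$ is $V$-generic for $\mathbb{P}$, then there exists in $V[g]$ a countably closed forcing $\mathbb{Q}$ such that a $V[g]$-generic filter for $\mathbb{Q}$ yields a $V$-generic filter for $\Coll_\kappa^V$ absorbing $g$. Equivalently, every small $V$-generic filter is absorbed into a full $V$-generic filter for the Levy collapse by countably closed forcing.

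Working in $M$, use the axiom of choice to enumerate $\R^M = \{r_\alpha : \alpha < \kappa\}$; condition~(b) gives, for each $\alpha$, a poset $\mathbb{P}_\alpha \in V$ which is countable in $M$ and a $V$-generic filter $g_\alpha \subset \mathbb{P}_\alpha$ with $r_\alpha \in V[g_\alpha]$. Take $\mathbb{W}$ to be the $M$-internal countable-support iteration of length $\kappa$ whose $\alpha$-th step applies the absorption construction to extend the partial $V$-generic filter built so far (for an initial segment $\Coll(\omega, <\lambda_\alpha)^V$ of the full collapse) so as to also absorb $g_\alpha$, with $\lambda_\alpha \to \kappa$. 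Every stage is countably closed in $M$, and countable-support iterations of countably closed forcings are countably closed, so $\mathbb{W}$ adds no reals. In a $\mathbb{W}$-generic extension $M[H]$, the union $G = \bigcup_{\alpha < \kappa} G_\alpha$ contains every $g_\alpha$, hence $\R^M \subset \R^{V[G]} \subset \R^{M[H]} = \R^M$, as required.

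The main obstacle is verifying $V$-genericity of $G$ for the full collapse $\Coll_\kappa^V$: because $\Coll_\kappa^V$ has the $\kappa$-cc in $V$, every maximal antichain of $\Coll_\kappa^V$ lying in $V$ is bounded inside some $\Coll(\omega, <\lambda)^V$ with $\lambda < \kappa$, so a bookkeeping that on a club of stages treats each such antichain forces $G$ to meet them all. Coherence of the sequence $G_\alpha$ relies on the standard factoring of the Levy collapse as $\Coll(\omega, <\lambda)$ followed by $\Coll(\omega, [\lambda, \kappa))$, which makes the successive absorption steps into a genuine increasing chain of partial $V$-generic filters.
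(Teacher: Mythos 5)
The paper does not actually prove this statement: it is imported as a black box from \cite[Lemma 1.2]{bagaria2004} and attributed to Woodin, so there is no internal proof to compare yours against; I can only assess your argument on its own. Your global strategy --- set $\kappa=\omega_1^M$ and design a reals-preserving forcing over $M$ that assembles the small $V$-generic filters supplied by condition (b) into one $V$-generic filter for $\Coll_\kappa^V$ whose extension has exactly the reals of $M$ --- is the right one, and the final reduction via the $\kappa$-cc of $\Coll_\kappa^V$ is fine. But two intermediate steps are wrong as stated and a third hides the entire difficulty. First, a weak Solovay model need not satisfy $\abs{\R}=\aleph_1$ (this is exactly what Fact \ref{fact:genericextensionisalsosolovay} is for: adding $\aleph_2$ random reals to a Solovay model yields a weak Solovay model with $2^{\aleph_0}>\aleph_1$), so the enumeration $\R^M=\{r_\alpha:\alpha<\kappa\}$ you begin with need not exist in $M$; the poset $\mathbb{W}$ must itself collapse $\abs{\R^M}$ to $\omega_1$ without adding reals, and assuming the enumeration up front is circular. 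Second, the ``absorption by a countably closed $\mathbb{Q}\in V[g]$'' lemma is false: the forcing that upgrades a $V$-generic filter for a small poset to a $V$-generic filter for $\Coll_\kappa^V$ is forcing-equivalent to the Levy collapse itself, which adds reals and collapses cardinals --- if it were countably closed, $V[G]$ would have the same reals as $V[g]$, which is absurd. What is true, and what any correct proof uses, is that the needed generic filters already exist \emph{inside} $M$ (the relevant posets together with the families of their dense subsets lying in the appropriate intermediate models have size $<\kappa$, hence are countable in $M$), so $\mathbb{W}$ only has to select and cohere objects of $M$; arranging that this selection is countably closed is precisely where the work lies.

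Third, and most seriously, the limit stages of your iteration break down. An increasing $\omega$-chain of $V$-generic filters $g_n$ for $\Coll(\omega,{<}\lambda_n)$ need not have a union that is $V$-generic for $\Coll(\omega,{<}\sup_n\lambda_n)$; worse, the union may extend to no $V$-generic filter at all. Concretely, if at each step one happens to choose the $V[g_n]$-generic filter for $\Coll(\omega,[\lambda_n,\lambda_{n+1}))^V$ below the condition sending $(\lambda_n,0)$ to $0$, then the dense open set $D=\{q : (\exists n)((\lambda_n,0)\in\dom q \AND q(\lambda_n,0)=1)\}$, which lies in $V$ once $\seq{\lambda_n : n\in\omega}\in V$, is avoided by the union, and every element of $D$ is incompatible with some condition already in the union. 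Hence the poset of partial $V$-generic filters ordered by end-extension is not countably closed, your iterands are not countably closed, and the blanket appeal to ``countable-support iterations of countably closed forcings are countably closed'' does not apply. The actual content of Woodin's lemma is the design of conditions that carry enough genericity commitments to rule out such bad limits; since your proposal does not address this, it has a genuine gap at its core.
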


\begin{fact}[{{\cite[Theorem 2.4]{bagaria2004}}}]\label{fact:genericextensionisalsosolovay}
	Suppose that $L(\R)^M$ is a Solovay model over $V$ in the weak sense and $\PP$ is a strongly-$\boldsig^1_3$ absolutely-ccc poset in $M$.
	Let $G$ be a $(M, \PP)$ generic filter.
	Then $L(\R)^{M[G]}$ is also a Solovay model in $V$ in the weak sense.
\end{fact}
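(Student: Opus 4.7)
The plan is to verify directly the two defining clauses of \emph{weak Solovay model} for $L(\R)^{M[G]}$: that $\omega_1$ is inaccessible in $V[x]$ for every real $x$, and that $V[x]$ is a generic extension of $V$ by a poset that is countable in the ambient model. The overall strategy mirrors the classical proof that $L(\R)$ of a Levy collapse is a Solovay model, but here we propagate the weak Solovay structure of $M$ through $\P$ rather than extract it from a collapse.

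As the opening step I would use that $\P$ is ccc in $M$ (a consequence of being strongly-$\boldsig^1_3$-absolutely-ccc) to conclude $\omega_1^{M[G]} = \omega_1^M$. All cardinals of $M$ are preserved; in particular, the $\omega_1$ of $M$, which was inaccessible in each $V[y]$ with $y \in \R^M$, remains the $\omega_1$ of $M[G]$.

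Next, given $x \in \R^{M[G]}$, the main task is to construct a countable-in-$M[G]$ poset $Q \in V$ and a $V$-generic filter $K$ for $Q$ with $V[x] = V[K]$. Let $\dot{x}$ be a $\P$-name for $x$. By ccc in $M$, for each $n$ choose a countable maximal antichain $A_n \subset \P$ deciding $\dot{x}(n)$, and put $B = \bigcup_n A_n$, a countable set in $M$. The parameters of $\dot x$ and of $B$ are coded by some real $y \in \R^M$, and clause (b) for $M$ produces a poset $Q_0 \in V$ countable in $M$ and a $V$-generic $K_0$ with $V[y] = V[K_0]$. The plan is then to amalgamate $Q_0$ with the ``small piece'' of the Boolean algebra generated by $B$ into a single poset $Q \in V$ that is still countable in $M[G]$, and to check that the generic picked out by $x$ realises $V[x]$ as a $Q$-extension of $V$. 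Clause (a) for $M[G]$ then follows automatically: $V[x] = V[K]$ is obtained from $V$ by a small forcing, so $\omega_1^{M[G]}$, inaccessible in $V$, is still inaccessible in $V[x]$.

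The principal obstacle is the technical content of ``strongly-$\boldsig^1_3$-absolutely-ccc'': this hypothesis is precisely engineered so that ccc-ness and the associated antichain and Boolean-algebra calculations transfer correctly between $V$, $V[y]$, $M$, and $M[G]$. Making the above synthesis of $Q_0$ with a countable portion of $\P$ genuinely live in $V$ and remain countable in $M[G]$ requires invoking $\boldsig^1_3$-absoluteness to verify that the relevant maximality and density facts for $\P$ computed in $M$ agree with those computed in $V[y]$; this is where the substance of Bagaria's original proof lies. Once this transfer is in place, verifying (a) and (b) is routine, and Woodin's Fact~\ref{fact:solovaymodelsweakandusual} can be used to streamline the final step if desired.
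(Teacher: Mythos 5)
The paper gives no proof of this statement at all: it is quoted verbatim as \cite[Theorem 2.4]{bagaria2004} and used as a black box (indeed the paper explicitly declines even to define ``strongly-$\boldsig^1_3$ absolutely-ccc''), so there is no in-paper argument to compare yours against. Judged on its own terms, your outline has the right overall shape (preserve $\omega_1$ by ccc-ness, reduce a new real $x$ to a countable amount of information via a nice name, pull that information down to a small generic extension of $V$), but the step that carries all the weight is wrong as stated, not merely deferred. You propose to ``amalgamate $Q_0$ with the small piece of the Boolean algebra generated by $B$ into a single poset $Q \in V$,'' where $B$ is a countable union of antichains of $\P$ chosen in $M$. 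There is no reason for $B$, or the subalgebra it generates, to belong to $V$ or to $V[y]$: $\P$ itself is a poset of $M$, and a countable subset of it selected in $M$ is in general not an object of any ground model. Moreover, even inside $M$ the complete subalgebra generated by a countable set in a ccc algebra can have size $\frakc^M \ge \omega_1^M = \omega_1^{M[G]}$, so it need not be countable in $M[G]$. So the poset $Q$ you need simply is not produced by this construction.

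What actually closes the gap is the definability built into the hypothesis, which your sketch never invokes in a checkable way: $\P$ is defined by a $\boldsig^1_3$ formula with a real parameter, so once that parameter is absorbed into $y$, the \emph{trace} $\P^{V[y]}$ is a poset lying in $V[y]$; the ``absolutely ccc'' clause guarantees that maximal antichains of $\P^{V[y]}$ computed in $V[y]$ remain maximal in $M$, whence $G \cap V[y]$ is $V[y]$-generic for $\P^{V[y]}$ and $x \in V[y][G \cap V[y]]$. Clause (a) of the weak Solovay property for $M$ then bounds $(2^{\aleph_0})^{V[y]}$ below $\omega_1^M$, so $\P^{V[y]}$ is countable in $M$, and the two-step iteration $Q_0 \ast \dot{\P}^{V[y]}$ is a poset in $V$ countable in $M[G]$; the intermediate-model theorem then realizes $V[x]$ itself as a generic extension of $V$ by a (still countable in $M[G]$) subalgebra. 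Without this use of definability and of clause (a) to control the size of $\P^{V[y]}$, the argument does not go through, so as written the proposal has a genuine gap rather than a routine omission.
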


We don't define the terminology ``strongly-$\boldsig^1_3$ absolutely-ccc poset" here. But the random forcing is such a poset and we will use only the random forcing when applying Fact \ref{fact:genericextensionisalsosolovay}.

\begin{lem}\label{lem:absoluteness}
	Let $M, N$ be models satisfying $V \subset M \subset N$.
	Assume that the $L(\R)$ of each of $M$ and $N$ are Solovay models over $V$ in the weak sense.
	Then for every formula $\varphi(v)$ in the language of set theory $\mathcal{L}_\in = \{\in\}$ and real $r$ in $M$, the assertion $``L(\R) \models \varphi(r)"$ is absolute between $M$ and $N$.
\end{lem}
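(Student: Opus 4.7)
The approach is to apply Fact~\ref{fact:solovaymodelsweakandusual} to reduce to the case of usual Solovay models, then invoke Solovay's characterization of $L(\R)$-truth as a forcing fact computed in $V[r]$, and finally show that this forcing fact is independent of which inaccessible is being collapsed.

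First, applying Fact~\ref{fact:solovaymodelsweakandusual} inside both $M$ and $N$, we obtain posets $\mathbb{W}_M \in M$ and $\mathbb{W}_N \in N$, each adding no new reals, that force $L(\R)^M$ respectively $L(\R)^N$ to be the $L(\R)$ of a usual Solovay model over $V$. Since these forcings preserve the reals, they preserve $L(\R)$, so we may assume without loss of generality that $L(\R)^M = L(\R)^{V[G_M]}$ and $L(\R)^N = L(\R)^{V[G_N]}$ with $G_M, G_N$ being $(V, \Coll_{\kappa_M})$- and $(V, \Coll_{\kappa_N})$-generic filters for inaccessibles $\kappa_M, \kappa_N$ of $V$.

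Next, by the standard Solovay characterization---which uses the weak homogeneity of the Levy collapse and the fact that each real in the Solovay extension appears at a bounded stage of the collapse---for every real $r$ in $V[G_M]$ and formula $\varphi$,
\[
L(\R)^M \models \varphi(r) \iff V[r] \models \bigl(\Coll_{\kappa_M} \forces L(\R) \models \varphi(\check r)\bigr),
\]
and similarly for $N$ with $\kappa_N$ in place of $\kappa_M$. Both $\kappa_M$ and $\kappa_N$ remain inaccessible in $V[r]$, since they are already inaccessible in $V$ and $V[r]$ is a small generic extension of $V$ (guaranteed by the weak-sense definition). The problem thereby reduces to showing that the forcing fact ``$V[r] \models \Coll_\kappa \forces L(\R) \models \varphi(\check r)$'' does not depend on the choice of inaccessible $\kappa$ of $V[r]$.

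For this $\kappa$-independence step, given inaccessibles $\kappa < \kappa'$ of $V[r]$, we factor the Levy collapse: any $V[r]$-generic filter $g'$ for $\Coll_{\kappa'}$ restricts to a $V[r]$-generic filter $g := g' \upharpoonright \kappa$ for $\Coll_\kappa$, and $V[r][g']$ sits above $V[r][g]$. Viewing $V[r][g']$ both as a usual Solovay extension of $V[r]$ collapsing $\kappa'$ and, by reapplying Solovay's analysis, as a further collapse extension of $V[r][g]$, and comparing the resulting characterizations of the theories of $L(\R)^{V[r][g]}$ and $L(\R)^{V[r][g']}$ with parameter $r$, yields the required equivalence of the two forcing facts. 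The main obstacle is precisely this $\kappa$-independence step, whose justification rests on the standard homogeneity properties of the Levy collapse; once it is in place, the characterizations for $M$ and $N$ agree, and the absoluteness of ``$L(\R) \models \varphi(r)$'' between $M$ and $N$ follows.
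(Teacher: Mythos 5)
Your first two steps are exactly the paper's proof: use Fact~\ref{fact:solovaymodelsweakandusual} to pass to usual Solovay models (harmless, since the auxiliary forcings add no reals and hence change neither $L(\R)$ nor the truth value of the relevant forcing statement over $V[r]$), and then apply the Solovay characterization $L(\R)^M \models \varphi(r) \iff V[r] \models \bigl(\Coll_{\kappa_M} \forces ``L(\R) \models \varphi(\check r)"\bigr)$, which rests on universality and homogeneity of the Levy collapse. The paper's proof writes a single $\kappa$ in this equivalence and stops there.

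The problem is your third step. The $\kappa$-independence you isolate as ``the main obstacle'' is not a provable fact: it is false in general, so no amount of factoring the collapse will yield it. Concretely, let $V = L$, let $\kappa < \kappa'$ be the first two inaccessible cardinals of $L$, let $g'$ be $\Coll_{\kappa'}$-generic over $L$ and $g = g' \upharpoonright \kappa$. Then $M = L[g] \subset N = L[g']$ are both Solovay models over $L$ in the usual (hence weak) sense, yet the sentence ``there is a real $x$ such that the least $L$-inaccessible is countable in $L[x]$'' holds in $L(\R)^N$ and fails in $L(\R)^M$, since every real of $M$ lies in some $L[g \upharpoonright \alpha]$ with $\alpha < \kappa$ and $\kappa$ stays inaccessible there. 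Hence $L \models (\Coll_\kappa \forces \cdots)$ and $L \models (\Coll_{\kappa'} \forces \cdots)$ genuinely disagree for this sentence. The place your sketch breaks is the ``comparison'': Solovay's analysis of $V[r][g']$ over the base $V[r][g]$ characterizes the theory of $L(\R)^{V[r][g']}$ by a forcing statement over $V[r][g]$, and nothing identifies that with the theory of $L(\R)^{V[r][g]}$ itself. What makes the lemma work where it is used is that the two reductions produce the \emph{same} inaccessible: the $\kappa$ coming out of Fact~\ref{fact:solovaymodelsweakandusual} is $\omega_1^M$ (respectively $\omega_1^N$), and in the paper's sole application $N = M[r]$ for a random real $r$, so $\omega_1^M = \omega_1^N$ and only one $\kappa$ ever appears. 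You should replace your step three by this observation; strictly speaking the lemma needs the hypothesis $\omega_1^M = \omega_1^N$, which the example above shows cannot be dropped.
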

\begin{proof}
	By Fact \ref{fact:solovaymodelsweakandusual}, we may assume that $L(\R)^M$ and $L(\R)^N$ are Solovay models over $V$ in the usual sense.
	By universality and homogeneity of the Levy collapse, we have
	\begin{align*}
		M \models ``L(\R) \models \varphi(r)" & \iff V[r] \models [\Coll_\kappa \forces ``L(\R) \models \varphi(r)"] \\
		& \iff N \models ``L(\R) \models \varphi(r)" 
	\end{align*}
\end{proof}

\begin{thm}\label{thm:solovaygpall}
	Let $\kappa$ be an inaccessible cardinal and $G$ be a $(V, \Coll_\kappa)$ generic filter.
	Then $L(\R)^{V[G]}$ satisfies $\GP(\all)$.
	That is, every Solovay model satisfies $\GP(\all)$.
\end{thm}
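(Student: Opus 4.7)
The plan is to emulate Goldstern's original argument (Theorem~\ref{goldsterntheorem}), replacing Shoenfield absoluteness with the $L(\R)$-absoluteness provided by Lemma~\ref{lem:absoluteness}. Writing $M := V[G]$, I work inside $L(\R)^M$ and suppose for contradiction that some $A \subseteq \omega^\omega \times 2^\omega$ witnesses $\neg\GP(\all)$, reducing $Y$ to $2^\omega$ as in Goldstern. Since every set of reals in the usual Solovay model is Lebesgue measurable, the union $B := \bigcup_{x} A_x$ has positive measure, so by inner regularity I pick a closed $K \subseteq B$ of positive measure with Borel code $k \in M$. Fix also a formula $\varphi$ and parameters $\vec p$ in $M$ such that $A = \{(x,y) : L(\R)^M \models \varphi(x, y, \vec p)\}$.

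Next, I force over $M$ with the random forcing $\B$ below the condition $[\hat{k}]$, producing a random real $r \in \hat{k}$. Set $N := M[r]$. Since random forcing is strongly-$\boldsig^1_3$ absolutely ccc, Fact~\ref{fact:genericextensionisalsosolovay} gives that $L(\R)^N$ is again a Solovay model over $V$ in the weak sense. In $L(\R)^N$ let $A^* := \{(x,y) : L(\R)^N \models \varphi(x, y, \vec p)\}$. By Lemma~\ref{lem:absoluteness}, applied to each of the $L(\R)$-statements expressing the $\GP(\all)$ hypotheses for $A$ and the containment $\hat{k} \subseteq B$, all of these transfer to $A^*$ in $L(\R)^N$: every $A^*_x$ is null, $A^*$ is $\le$-increasing in its first coordinate, and $\hat{k} \subseteq \bigcup_{x \in \omega^\omega \cap N} A^*_x$. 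In particular there is $x_0 \in \omega^\omega \cap N$ with $r \in A^*_{x_0}$.

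The contradiction now parallels Goldstern's: by $\omega^\omega$-bounding of random forcing, there is $g \in M \cap \omega^\omega$ with $x_0 \le^* g$, and for a sufficiently large $C \in \omega$ the function $x(m) := \max(g(m), C)$ lies in $M$ and satisfies $x_0 \le x$ componentwise, so the transferred increasing property gives $r \in A^*_x$. On the other hand, since $A_x$ is null in $M$, it is contained in some $\hat{c_x}$ for a null Borel code $c_x \in M$; this containment transfers by Lemma~\ref{lem:absoluteness} to $A^*_x \subseteq \hat{c_x}$ in $L(\R)^N$, and the randomness of $r$ over $M$ yields $r \notin \hat{c_x}$, hence $r \notin A^*_x$---a contradiction.

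The main subtlety is that the defining parameters $\vec p$ for $A$ in $L(\R)^M$ may a priori include ordinals while Lemma~\ref{lem:absoluteness} is stated only for a real parameter. This is not a genuine obstacle: the proof of the lemma (universality and homogeneity of $\Coll_\kappa$) extends verbatim to formulas with ordinal constants, since ordinal constants are interpreted identically in $V$, $M$ and $N$, and the homogeneity of $\Coll_\kappa$ is insensitive to ordinal parameters appearing in the forced sentence.
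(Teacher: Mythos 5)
Your proposal is correct and follows essentially the same route as the paper: random forcing over the Solovay model, with Lemma~\ref{lem:absoluteness} (via Fact~\ref{fact:genericextensionisalsosolovay}) replacing Shoenfield absoluteness, plus $\omega^\omega$-bounding and the randomness of $r$ against ground-model null codes. The only cosmetic difference is that you transfer the covering statement $\hat{k}\subset\bigcup_x A_x$ forward and derive the contradiction inside $L(\R)^{M[r]}$, whereas the paper derives $\hat{d}\setminus\bigcup_x A_x\ne\emptyset$ in the extension and pulls it back; your closing remark about ordinal parameters in Lemma~\ref{lem:absoluteness} is apt, since the paper's own proof uses an ordinal parameter $\alpha$.
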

\begin{proof}
	Let $A \subset \omega^\omega \times 2^\omega$ in $L(\R)^{V[G]}$.
	Take a formula $\varphi$ and an ordinal $\alpha$ such that 
	\[A = \{ (x, y) : \varphi(\alpha, x, y) \}^{L(\R)^{V[G]}}.\]
	In $L(\R)^{V[G]}$, assume that
	\begin{enumerate}
		\item[(1)] $(\forall x \in \omega^\omega) (\exists c_x \in \omega^\omega) (c_x \text{ is a Borel code for a measure 0 set} \AND A_x \subset \hat{c_x})$
		\item[(2)] $(\forall x, x' \in \omega^\omega) (x \le x' \rightarrow A_x \subset A_{x'})$.
	\end{enumerate}
	Using the axiom of choice in $V[G]$ we can choose such a family $(c_x : x \in \omega^\omega)$. Note that this family is not necessarily in $L(\R)^{V[G]}$.
	
	Since every set of reals is measurable in $L(\R)^{V[G]}$, $\bigcup_{x \in \omega^\omega} A_x$ is measurable in $L(\R)^{V[G]}$.
	Now we assume that the measure is positive and take a closed code $d$ in $L(\R)^{V[G]}$ such that $\mu(\hat{d}) > 0$ and $\hat{d} \subset \bigcup_{x \in \omega^\omega} A_x$ in $L(\R)^{V[G]}$.
	
	Take a random real $r$ over $V[G]$ with $r \in \hat{d}$.
	Then by Lemma \ref{lem:absoluteness}, we have in $L(\R)^{V[G][r]}$
	\begin{enumerate}
		\item[(1')] $(\forall x \in \omega^\omega  \cap L(\R)^{V[G]}) (A_x \subset \hat{c_x})$, and
		\item[(2')] $(\forall x, x' \in \omega^\omega) (x \le x' \rightarrow A_x \subset A_{x'})$.
	\end{enumerate}
	By randomness, we have $r \not \in \hat{c_x}$ for all $x  \in \omega^\omega  \cap L(\R)^{V[G]}$.
	But (2') and the fact that the random forcing is $\omega^\omega$-bounding imply $r \not \in A_x$ for all $x \in \omega^\omega$ in $V[G][r]$.
	Thus we have $\hat{d} \setminus \bigcup_{x \in \omega^\omega} A_x \ne \emptyset$ in $L(\R)^{V[G][r]}$.
	Then using Lemma \ref{lem:absoluteness} again, we have $\hat{d} \setminus \bigcup_{x \in \omega^\omega} A_x \ne \emptyset$ in $L(\R)^{V[G]}$.
	It is a contradiction to the choice of $d$.
\end{proof}

\section{Other ideals than the Lebesgue measure zero ideal}\label{sec:otherideals}

In this section, we consider other ideals than the Lebesgue measure zero ideal.

\begin{defi}
	Let $\Gamma$ be a pointclass and $\I$ be an ideal on a Polish space $Y$.
	Then $\GP(\Gamma, \I)$ means the following statement:
	Let $A \subset \omega^\omega \times Y$ be in $\Gamma$.
	Assume the monotonicity condition for $A$.
	Assume also $A_x \in \I$ for every $x \in \omega^\omega$.
	Then $\bigcup_{x \in \omega^\omega} A_x$ is also in $\I$.
\end{defi}

Note that $\GP(\Borel, \meager)$ does not hold by considering $A_x = \{ y \in \omega^\omega : y \le^* x \}$.

In what follows, $\mathcal{E}$ means the $\sigma$-ideal generated by closed null sets. We would like to prove $\GP(\boldpi^1_1, \mathcal{E})$ does not hold.

To prove it, we recall the basic notions of interval partitions. Let $\IP$ be the set of all interval partitions.
Also $\sqsubset$ is the order on $\IP$ such that
\[
(I_k)_{k \in \omega} \sqsubset (J_n)_{n \in \omega} \iff (\forall^\infty n)(\exists k)(I_k \subset J_n).
\]

We use the following well-known fact.

\begin{fact}[{{\cite[Theorem 2.10]{blass2010combinatorial}}}]\label{fact:omegaomegaandip}
	There is a Borel Tukey connection from $\IP$ to $\omega^\omega$.
	That is there is Borel maps $\phi \colon \IP \to \omega^\omega$ and $\psi \colon \omega^\omega \to \IP$ such that $\phi(I) \le^* y$ implies $I \sqsubset \psi(y)$ for every $I \in \IP$ and $y \in \omega^\omega$. 
\end{fact}

	\begin{thm}
		$\GP(\boldpi^1_1, \mathcal{E})$ does not hold.
	\end{thm}
	\begin{proof}
		Let $\seq{I_n : n \in \omega}$ be the interval partition with $\abs{I_n} = n+1$.
		Let $A = \{ x \in 2^\omega : (\exists^\infty n)(x \upharpoonright I_n \equiv 0) \}$.
		It is well-known that $A$ is null and comeager. In particular $A \not \in \mathcal{E}$.
		For an interval partition $J = \seq{J_k : k \in \omega}$, let 
		\[A_J = \{ x \in 2^\omega : (\forall^\infty k)(\exists n \in J_k) x \upharpoonright I_n \equiv 0 \}.\]
		Each $A_J$ is $F_\sigma$ null set, so it holds that $A_J \in \mathcal{E}$.
		And also we have $\bigcup_{J} A_J = A \not \in \mathcal{E}$.
		
		Moreover, $\{ (J, y) : y \in A_J \}$ is Borel and the family is monotone.
		
		We now have to translate this family to the family indexed by $\omega^\omega$.
		
		Let $(\phi, \psi)$ be the Tukey connection in Fact \ref{fact:omegaomegaandip}.
		Let $B_x = \bigcap_{y \ge^* x} A_{\psi(y)}$ for $x \in \omega^\omega$.
		$\seq{B_x : x \in \omega^\omega}$ is clearly monotone, each $B_x$ is in $\mathcal{E}$ and the family $B$ is $\boldpi^1_1$.
		On the other hand we have $\bigcup_{J \in \IP} A_J \subset \bigcup_{x \in \omega} B_x$.
		To see it, let $J \in \IP$ and $a \in A_J$.
		Set $x = \phi(J)$.
		Then $x \le^* y$ implies $J \sqsubset \psi(y)$ since $(\phi, \psi)$ is Tukey. So $a \in A_J \subset A_{\psi(y)}$.
		Therefore we have $a \in B_x$.
		
		Since $\bigcup_{J \in \IP} A_J \not \in \mathcal{E}$, we have constructed a counterexample $\seq{B_x : x \in \omega^\omega}$ of $\GP(\boldpi^1_1, \mathcal{E})$.
	\end{proof}

\section{Open problems}

In this section, we list the problems that remain.

\begin{prob}\label{prob:largecontinuum}
	\begin{enumerate}
	\item Is $\ZFC+(\frakc>\aleph_2)+\GP(\all)$ consistent?
	\item Is $\ZFC+(\frakb<\frakd)+\GP(\all)$ consistent?
	\end{enumerate}
\end{prob}

We suspect that (1) of Problem \ref{prob:largecontinuum} is proved by adding many random reals over the Laver model.

\begin{prob}
	How much separation is possible between $\GP$ of point classes in the projection hierarchy? In particular how much is possible without large cardinals?
\end{prob}
Note that we separated $\GP(\boldsig^1_{n+1})$ and $\GP(\bolddelta^1_{n+2})$ using $n$ many Woodin cardinals. We are asking how about separation between $\GP(\boldsig^1_{n+1})$ and $\GP(\boldpi^1_{n+1})$? Also, we are asking whether we can eliminate the use of Woodin cardinals.

\begin{prob}
	Can $\GP(\bolddelta^1_2)$ and $(\forall a \in \R)(\exists z \in \omega^\omega) (\text{$z$ is a dominating real over $L[a]$})$ be separated?
\end{prob}

A nice candidate of this problem is Hechler model over $L$.

\begin{prob}\label{prob:lmplusnotgp}
Is there a model of $\ZF$ satisfying that every set of reals is measurable and $\neg \GP(\all)$?
\end{prob}

Shelah \cite{Sh:218} constructed a model of $\ZF$+(the dependent choice)+(every set of reals is Lebesgue measurable)+(there is a set of reals that does not have the Baire property). The idea of this construction might help Problem \ref{prob:lmplusnotgp}.

\begin{prob}
	Does $\GP(\all)$ imply the Borel conjecture?
\end{prob}
Note that the Borel conjecture does not imply $\GP(\all)$, since the Mathias model satisfies the Borel conjecture and $\non(\nul)=\frakb$, which implies $\neg \GP(\all)$.

Finally, we ask:

\begin{prob}
	Does $\GP(\boldsig^1_1, \mathcal{E})$ hold?
\end{prob}

	\section*{Acknowledgments}
	
	The author would like to thank Yasuo Yoshinobu, Takayuki Kihara and Jörg Brendle, who gave him advices on this research.
	In particular, the idea of the proof of Theorem \ref{lavermodelthm} is due to Brendle.
	He also would like to thank Martin Goldstern, who read the preprint of this paper and pointed out errors.
	The author also would like to thank Daisuke Ikegami, who gave him advice in proving Theorem \ref{thm:solovaygpall}.
	This work was supported by JSPS KAKENHI Grant Number JP22J20021.
	
	\printbibliography[title={References}]
\end{document}